\def\@cite#1#2{{\m@th\upshape\bfseries%
[{#1\if@tempswa{\m@th\upshape\mdseries, #2}\fi}]}}
\DeclareMathOperator{\GL}{GL}
\DeclareMathOperator{\SL}{SL}
\DeclareMathOperator{\Aut}{Aut}
\DeclareMathOperator{\Aff}{Aff}
\newcommand{\Z}{\mathbb{Z}}
\newcommand{\Q}{\mathbb{Q}}
\newcommand{\R}{\mathbb{R}}
\newcommand{\C}{\mathbb{C}}
\renewcommand*\cal[1]{\mathcal{#1}}
\newcommand*\define[1]{\textit{#1}}
\title{Periodic points on the regular and double $n$-gon surfaces}
\author{Paul Apisa, Rafael M.~Saavedra, and Christopher Zhang}
\date{\today }
\begin{document}

\theoremstyle{plain}
\newtheorem{theorem}{Theorem}[section]
\newtheorem{thm}[theorem]{Theorem}
\newtheorem{prop}[theorem]{Proposition}
\newtheorem{conj}[theorem]{Conjecture}
\newtheorem{lemma}[theorem]{Lemma}
\newtheorem{sublem}[theorem]{Sublemma}

\newtheorem{prob}[theorem]{Problem}
\newtheorem{axiom}[theorem]{Axiom}
\newtheorem{cor}[theorem]{Corollary}

\newtheorem{alg}[theorem]{Algorithm}
\newtheorem{form}[theorem]{Formula}

\theoremstyle{remark}
\newtheorem{rmk}[theorem]{Remark}
\newtheorem{ques}[theorem]{Question}
\newtheorem{motto}[theorem]{Motto}
\newtheorem{claim}[theorem]{Claim}

\theoremstyle{definition}
\newtheorem{defn}[theorem]{Definition}
\newtheorem{ex}[theorem]{Example}

\maketitle

\begin{abstract}
    Using the transfer principle, we classify the periodic points on the regular $n$-gon and double $n$-gon translation surfaces and deduce consequences for the finite blocking problem on rational triangles that unfold to these surfaces.
\end{abstract}

















\section{Introduction}
The group $\GL^+(2,\R)$ acts on the moduli space of translation surfaces, which is stratified by specifying the number of singularities of the flat metric and their cone angles. This action, which is generated by complex scalar multiplication and Teichm\"uller geodesic flow, preserves these strata. In the sequel, an element of a stratum will be denoted $(X, \omega)$ where $X$ is a Riemann surface and $\omega$ is a holomorphic 1-form that induces the translation surface structure. Given such a point,  its stabilizer $\SL(X,\omega)$ in $\SL(2,\R)$ is called the \define{Veech group}, and $(X, \omega)$ is called a \define{Veech surface} when this group is a lattice.

\begin{defn}
A point $p$ on a Veech surface $(X, \omega)$ is called \emph{periodic} if $p$ is not a zero of $\omega$ and if its orbit under $\mathrm{Aff}(X, \omega)$--the affine diffeomorphism group of $(X, \omega)$--is finite. 
\end{defn}

\begin{rmk}
Our definition is equivalent to the one used in Apisa \cite{earle_kra}. A version of this definition which includes the zeros of $\omega$ first appeared in Gutkin-Hubert-Schmidt \cite{affine_diffeomorphisms}.  Under this definition, an equivalent notion of a periodic point is a point marked by a holomorphic multisection of the universal curve over a suitable finite cover of the Teichm\"uller curve associated to $(X, \omega)$. See M\"oller \cite[Lemma 1.2]{moeller_periodic_points} for details.
\end{rmk} 

Consider the following translation surfaces. For $n$ even, the \define{regular $n$-gon surface} is the regular $n$-gon with opposite sides identified, and for $n$ odd, the \define{double $n$-gon surface} is two copies of a regular $n$-gon differing by a rotation by $\pi$ with parallel sides glued together. The regular $10$-gon and the double $7$-gon surfaces are depicted in Figure~\ref{lines}. Both the regular and double $n$-gon surfaces are hyperelliptic with their hyperelliptic involutions being affine diffeomorphisms of derivative $-\mathrm{Id}$.  The \define{Weierstrass points} are the fixed points of this involution. In~\cite{veech_eisenstein}, Veech proved that the regular $n$-gon and double $n$-gon surfaces are Veech surfaces for all $n \geq 3$. Our main result is a classification of periodic points on these surfaces.

\begin{thm} \label{maintheorem}
When $n \geq 5$ and $n\ne 6$, the periodic points of the regular $n$-gon and double $n$-gon surfaces are exactly the Weierstrass points that are not singularities of the flat metric. 
\end{thm}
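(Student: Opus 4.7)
The proof naturally splits into two directions. The easy inclusion shows that non-singular Weierstrass points are periodic: since the hyperelliptic involution is canonically determined by the Riemann surface (as the unique involution whose quotient has genus $0$), its fixed set---the Weierstrass points---is preserved by every element of $\mathrm{Aff}(X,\omega)$. As the Weierstrass set is finite (of cardinality $2g+2$), every Weierstrass point has finite $\mathrm{Aff}$-orbit, hence is periodic provided it is not a zero of $\omega$.

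For the reverse inclusion, my plan is to combine the transfer principle (as alluded to in the abstract) with M\"oller's characterization, cited in the remark, of periodic points as torsion sections of the relative Jacobian over a finite cover of the Teichm\"uller curve. The first step is to reinterpret a hypothetical non-Weierstrass periodic point as a torsion section of the Jacobian fibration, and then to exploit the real multiplication by $\Z[\zeta_n+\zeta_n^{-1}]$ enjoyed by these surfaces. This real multiplication decomposes the Jacobian into isotypic pieces indexed by Galois conjugates, and a torsion section must remain torsion in each piece, heavily constraining its coordinates in the cyclotomic setting. The transfer principle enters by allowing one to relate periodic points across $\GL^+(2,\R)$-equivariant coverings of translation surfaces---moving between related surfaces in the regular and double $n$-gon family (and perhaps to more symmetric covers such as the Bouw--M\"oller or triangle-group covers)---so that the arithmetic question can be shifted to whichever model is easiest to analyze.

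I expect the main obstacle to be controlling the torsion subgroups inside each real-multiplication eigenpiece and ruling out sporadic torsion points that do not arise from Weierstrass points. This typically reduces to a combination of Galois-theoretic constraints in $\Z[\zeta_n]$ with direct inspection of the geometry of the polygon, comparing the coordinates of any candidate torsion section to the explicit positions of the Weierstrass points in the polygonal model. The exclusion of $n=6$ in the hypothesis is itself a clue: for $n=6$ one has $\Q(\zeta_6)=\Q(\zeta_3)$, an imaginary quadratic field, so the Jacobian acquires complex rather than real multiplication and the arithmetic constraints degenerate; the small cases $n=3,4$ are tori, where the notion of Weierstrass point is vacuous or different. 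Once the torsion analysis in each eigenpiece is pinned down and matched against the tabulated polygonal coordinates of the Weierstrass points, the theorem follows.
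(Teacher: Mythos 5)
Your forward inclusion (Weierstrass points are periodic) is fine and matches the standard argument. But the reverse inclusion, which is the entire content of the theorem, is a research program rather than a proof, and it rests on two misreadings. First, the ``transfer principle'' in this paper is not a device for moving periodic points between covering surfaces: it is the dynamical correspondence between $H$-orbits on $G\backslash\mathcal{X}$ and $G$-orbits on $\mathcal{X}/H$, applied with $G=\Gamma_n$, $\mathcal{X}=\SL(2,\R)$, $H$ the upper-triangular unipotent subgroup. Combined with Dani's theorem (non-dense $U$-orbits on $\Gamma_n\backslash\SL(2,\R)$ are closed horocycles around cusps), it shows that the holonomy vector of a segment from the distinguished fixed point $P_n$ to any periodic point must, after applying the Veech group, be parallel to a cusp direction; this pins every periodic point onto finitely many explicit candidate saddle connections through $P_n$. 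Nothing in your sketch produces this reduction. Second, the step you defer---``controlling the torsion subgroups inside each real-multiplication eigenpiece and ruling out sporadic torsion points''---is precisely the hard part, and for general $n$ the trace field $\Q(\zeta_n+\zeta_n^{-1})$ has large degree and the genus grows, so the genus-two eigenform technology that makes M\"oller's torsion argument work for $n=5,8,10$ does not carry over; you give no mechanism for carrying it out. The paper instead avoids the Jacobian entirely: it finishes with the elementary ``rational height lemma'' (a periodic point on a Veech surface sits at rational height in every cylinder containing it) played against the irrationality of height ratios of adjacent parallel cylinders, which reduces to showing $\sin(\pi\alpha)/\sin(\pi\beta)$ is irrational for the relevant rational $\alpha,\beta$ (a statement needing Niven's theorem plus McMullen's cyclotomic bounds, not real multiplication).

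A smaller but genuine error: your explanation of the exclusion of $n=6$ is wrong. The regular hexagon surface is a torus (as are the $n=3,4$ cases), and tori have infinitely many periodic points coming from torsion; the issue is not that $\Q(\zeta_6)$ is imaginary quadratic. Relatedly, the value $6$ resurfaces in the flat-geometry argument because $\sin(\pi/6)/\sin(\pi/2)=1/2$ is the unique rational sine ratio, which forces the paper to treat $n\equiv 0,6 \pmod{12}$ with a little extra care in choosing cylinder directions---a subtlety your approach would not detect.
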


\begin{rmk}
When $n=5,8$, and $10$ this result was shown by M\"oller~\cite[Theorems 5.1, 5.2]{moeller_periodic_points}. When $n = 3, 4$ or $6$ the surfaces are tori and have infinitely many periodic points coming from torsion points.
\end{rmk}

The proof can be divided into two steps. First, we use the transfer principle to reduce the problem to classifying periodic points on an explicit set of saddle connections. Second, we classify the periodic points on these saddle connections by covering them with two collections of non-parallel cylinders and using the ``rational height lemma" of Apisa \cite{earle_kra}, which we will recall. 

The regular $n$-gon and double $n$-gon surfaces belong to a larger infinite family of Veech surfaces called the Veech-Ward-Bouw-M\"oller surfaces \cite{bouw_moeller}. It would be interesting to know whether our methods could be used to classify the periodic points on these surfaces, which, by Hooper \cite{grid_graphs} and Wright \cite{wright_bouw_moller}, also admit a presentation as a disjoint union of regular polygons with side identifications. 


Theorem~\ref{maintheorem} has consequences for the finite blocking problem.

\begin{defn}
Two points $P,Q$ on a billiard table (resp.~translation surface) $M$ are \define{finitely blocked} if there is a finite set of points $S\subset M - \{P, Q\}$ such that all billiard trajectories (resp.~straight line segments that do not contain singularities in their interior) from $P$ to $Q$ pass through a point in $S$. 
\end{defn}

The following corollaries will be proven after the proof of the main theorem.

\begin{cor}\label{main_corollary}
When $n \geq 5$ and $n \ne 6$, the pairs of finitely blocked points on the regular $n$-gon and double $n$-gon surfaces consist precisely of any point that is not a singularity and its image under the hyperelliptic involution.
\end{cor}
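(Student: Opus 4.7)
The plan is to prove the two implications of the characterization.

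For the sufficiency direction, I would fix a non-singular point $P$, set $Q = \tau(P)$ where $\tau$ denotes the hyperelliptic involution, and show that the finite set of Weierstrass points (with $P$ and $Q$ removed if necessary) blocks $P$ from $Q$. Let $\gamma \colon [0,1] \to X$ be any straight-line segment from $P$ to $Q$ avoiding singularities in its interior. Since $\tau$ is affine with derivative $-\Id$, the reversed path $t \mapsto \tau(\gamma(1-t))$ is also a straight-line segment from $P$ to $Q$ with the same holonomy vector as $\gamma$; by the fact that a straight-line segment between two non-singular points is determined by its holonomy, it coincides with $\gamma$, and evaluating at $t=1/2$ yields $\tau(\gamma(1/2)) = \gamma(1/2)$, so the midpoint is a Weierstrass point.

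For the necessity direction, suppose $(P,Q)$ is finitely blocked with $P \neq Q$ non-singular. I would appeal to a result, essentially due to Monteil and applicable to Veech surfaces, that converts finite blocking into a pure periodicity condition on $(P,Q)$: in every periodic direction, $P$ and $Q$ must lie in a common cylinder and their transverse coordinates differ by a rational multiple of the cylinder's circumference, with the rationals cohering across all such directions. This condition manifests, via the transfer principle used in the proof of Theorem~\ref{maintheorem}, as the periodicity of the formal divisor $P + Q$ under a suitable marked-surface extension of $\Aff(X,\omega)$. Combining with Theorem~\ref{maintheorem}, which identifies periodic points with the Weierstrass locus, and using that the hyperelliptic involution is the only affine involution with derivative $-\Id$ on the $n$-gon surfaces, one concludes that the only periodic pair divisors with $P \neq Q$ are those of the form $P + \tau(P)$, so $Q = \tau(P)$.

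The principal difficulty is the necessity direction: precisely extracting the periodicity condition from finite blocking, and then adapting the techniques of Theorem~\ref{maintheorem} from individual points to divisor-valued analogues. In particular, one must rule out finitely blocked pairs of distinct Weierstrass points, which would a priori satisfy the periodicity condition. The sufficiency direction is essentially a classical symmetry argument, so the weight of the proof sits in invoking and applying the appropriate finite-blocking result to parallel the cylinder-based analysis used for the main theorem.
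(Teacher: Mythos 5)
Your sufficiency direction is correct and is essentially the paper's: it is the content of the cited result \cite[Lemma 3.1]{marked_points}, reproved via the $-\Id$ symmetry of the hyperelliptic involution. The necessity direction, however, has a genuine gap: you never supply the structural result you lean on, and the part you defer is exactly where the proof lives. The paper does not argue via ``periodicity of the divisor $P+Q$'' adapted from Theorem~\ref{maintheorem}; it invokes the Apisa--Wright classification of finitely blocked pairs, which says a blocked pair $(p,q)$ either consists of periodic points or zeros of $\omega$ (with blocking set the remaining periodic points), or else satisfies $\pi_{Q_{min}}(p)=\pi_{Q_{min}}(q)$ for the \emph{minimal} half-translation cover $\pi_{Q_{min}}$. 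The step your sketch is missing is the identification of $\pi_{Q_{min}}$ with the quotient by the hyperelliptic involution, and this is precisely where Lemma~\ref{min_cover} enters: one must first know that the regular and double $n$-gon surfaces admit no nontrivial translation cover, so that $\pi_{X_{min}}$ is the identity and $\pi_{Q_{min}}$ has degree at most two, whence uniqueness forces it to be the hyperelliptic quotient. Without that input, a condition of the form ``$P$ and $Q$ are identified under every cylinder-wise rationality constraint'' does not yield $Q=\tau(P)$; a priori $P$ and $Q$ could be identified by some other half-translation cover of higher degree. Your proposed ``divisor-valued analogue of Theorem~\ref{maintheorem}'' is the hard part and is left entirely unexecuted; the rational-height and cylinder machinery of the main theorem is built for single points and does not transfer to pairs without a new idea.

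You also flag, but do not resolve, the residual case of pairs drawn from the periodic points and the zeros of $\omega$. Here the paper's argument is short and concrete: by the structure theorem the blocking set for such a pair may be taken to be the remaining periodic points, which by Theorem~\ref{maintheorem} are the non-singular Weierstrass points, explicitly the center, the edge midpoints, and (when $4 \mid n$) the vertices of the polygon(s); convexity then produces a straight segment between any two distinct such points, or from a zero of $\omega$ to itself, that misses all the others. This also disposes of the case $p=q$ a singularity, which your proposal does not address at all.
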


Via the unfolding construction of Katok-Zemlyakov \cite{unfolding}, the $\left(\frac{\pi}{2}, \frac{\pi}{n}, \frac{(n-2)\pi}{2n} \right)$ triangle unfolds to the regular $n$-gon or double $n$-gon surface when $n$ is even or odd respectively. Therefore, a consequence of the previous corollary is the following.

\begin{cor} \label{application}
When $n \geq 5$ and $n \ne 6$, the $\left(\frac{\pi}{2}, \frac{\pi}{n}, \frac{(n-2)\pi}{2n}\right)$ triangle admits a pair of finitely blocked points if and only if $n$ is even, in which case the only such pair is the vertex of angle $\frac{\pi}{n}$ and itself. 
\end{cor}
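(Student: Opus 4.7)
The plan is to transfer the finite blocking question from the triangle $T = (\pi/2, \pi/n, (n-2)\pi/(2n))$ to its unfolded translation surface $X$ (the regular $n$-gon for $n$ even, the double $n$-gon for $n$ odd) and then invoke Corollary~\ref{main_corollary}. First I would make the correspondence precise: via the unfolding, $T$ embeds in $X$ as a fundamental domain for a finite group $G$ generated by the reflections in its sides, with quotient $\pi\colon X \to T$. The key observation is that for any fixed preimage $\tilde P$ of $P$, every billiard trajectory from $P$ to $Q$ in $T$ lifts uniquely to a straight-line segment from $\tilde P$ to a preimage of $Q$, and the endpoint sweeps through every preimage of $Q$ as the initial direction varies. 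Lifting a blocking set from $T$ to $X$, and conversely projecting a union of blocking sets from $X$ down to $T$, then yields the equivalence: $(P,Q)$ is finitely blocked on $T$ if and only if $(\tilde P, \tilde Q_j)$ is finitely blocked on $X$ for every preimage $\tilde Q_j$ of $Q$.

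Next I would apply Corollary~\ref{main_corollary}, which forces $\tilde P$ to be non-singular and every $\tilde Q_j$ to equal the single point $\iota(\tilde P)$, where $\iota$ is the hyperelliptic involution. Hence $Q$ must have a unique preimage on $X$; a symmetric argument (or equivalently, running the forward condition with two different lifts of $P$) forces $P$ to have a unique preimage too. Since interior points of $T$ have trivial $G$-stabilizer and edge-interior points are stabilized only by the reflection through that edge, their $G$-orbits have sizes $|G|$ and $|G|/2$, neither a singleton for $n \geq 5$. So $P$ and $Q$ must both be vertices of $T$.

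The remaining step is a direct stabilizer computation at each of the three vertices $A$, $B$, $C$ of angles $\pi/2$, $\pi/n$, $(n-2)\pi/(2n)$. Using $|G| = 2n$ for $n$ even and $|G| = 4n$ for $n$ odd, together with the fact that the stabilizer at a vertex of angle $\alpha$ is the dihedral subgroup of $G$ generated by the two adjacent reflections, I expect to find: $A$ (the midpoint of a polygon edge) has stabilizer of order $4$ and so multiple lifts for $n \geq 5$; $B$ (the center of the $n$-gon) has stabilizer of order $2n$, giving a unique lift for $n$ even but two lifts (the two centers, swapped by $\iota$) for $n$ odd; and $C$ (a vertex of the polygon) corresponds to a singularity of the flat metric for $n \geq 5$ with $n \neq 6$, so it is excluded by Corollary~\ref{main_corollary}. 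Thus for $n$ odd no finitely blocked pair exists, while for $n$ even the only candidate is the self-pair $(B, B)$, which is finitely blocked because $B$ is a Weierstrass point with $\iota(B) = B$. Since $B$ is the vertex of angle $\pi/n$, this yields Corollary~\ref{application}.

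The main technical obstacle will be the equivalence in the first paragraph: one has to handle the possibility that a blocking set on $X$ meets some preimage of $P$ or $Q$, which could prevent projection to a valid blocking set on $T$. This difficulty resolves once the structural conclusion of the second paragraph is in place, namely that $P$ and $Q$ each have a unique preimage; after that, the stabilizer computations are elementary but still require some care in tracking how polygon vertices are identified on $X$ (which depends on $n \bmod 4$ in the even case).
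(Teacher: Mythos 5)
Your proposal is correct and follows essentially the same route as the paper: unfold the triangle to the regular or double $n$-gon surface, invoke Corollary~\ref{main_corollary} to force each of $P$ and $Q$ to have very few preimages, and then check that only the vertex of angle $\frac{\pi}{n}$ survives, with its self-pair blocked exactly when $n$ is even (unique lift $P_n$, a Weierstrass point) and not when $n$ is odd (two lifts swapped by the hyperelliptic involution). Your explicit stabilizer counts and your flag about blocking sets meeting other preimages of $P$ or $Q$ are details the paper asserts or elides, but they are handled correctly and do not change the argument.
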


\begin{rmk}
Similar statements can be deduced for the $\left(\frac{\pi}{n}, \frac{\pi}{n}, \frac{(n-2)\pi}{n} \right)$ and $\left(\frac{2\pi}{n}, \frac{(n-2)\pi}{2n}, \frac{(n-2)\pi}{2n} \right)$ triangles, which unfold to the regular $n$-gon surface, the double $n$-gon surface, or a double cover of one of those surfaces.
\end{rmk}

The remainder of the paper is divided into three sections. In Section \ref{S:Preliminaries} we establish some facts about the flat geometry of the regular and double $n$-gon surfaces. This will require a lemma on the rationality of ratios of sines that will be proven in Section \ref{S:Ratio of Sines}. Section \ref{S:MainTheorem} contains the proof of Theorem \ref{maintheorem} and its corollaries. 

\medskip \noindent \textbf{Context.}  Some of the earliest results on periodic points and the finite blocking problem, especially for Veech surfaces, are due to Gutkin-Hubert-Schmidt \cite{affine_diffeomorphisms}, Hubert-Schmoll-Troubetzkoy \cite{HST}, and Monteil \cite{Mont1, Mont2}; see especially \cite[Theorem 1]{Mont1} for related work on the finite blocking problem in the regular $n$-gon.

Periodic points in strata of Abelian and quadratic differentials were classified by Apisa \cite{earle_kra} and Apisa-Wright \cite{marked_points} respectively. Periodic points for every genus two translation surface were classified by M\"oller \cite{moeller_periodic_points} and Apisa \cite{Apisa:MarkedPointsGenus2}. These works  use the classification of $\mathrm{GL}(2, \R)$ orbit closures in genus two strata of Abelian differentials by McMullen \cite{Mc5}. The only periodic point on a genus two translation surface that is not a Weierstrass point was discovered by Kumar-Mukamel \cite{KM} and relates to orbit closures discovered by Eskin-McMullen-Mukamel-Wright \cite{EMMW}. 

For recent work on the finite blocking and illumination problems that was inspired by work of Eskin, Mirzakhani, and Mohammadi (\cite{EM, EMM}), see Leli\`evre-Monteil-Weiss \cite{LMW}, Apisa-Wright \cite{marked_points}, and Wolecki \cite{Wolecki}.





\subsection*{Acknowledgements} This work was done at the 2020 University of Michigan REU. We are grateful to Alex Wright as a co-organizer of the REU and supervisor of the project and for helpful conversations.
This material is based upon work supported by the National Science Foundation Grant No. DMS 1856155. During the preparation of this paper, the first author was partially supported by NSF Postdoctoral Fellowship DMS 1803625, and the third author was partially supported by the NSF Graduate Research Fellowship DGE 1841052.


\section{Preliminaries}\label{S:Preliminaries}


Fix an integer $n$ so that either $n = 5$ or $n \geq 7$. Let $R_1$ denote a regular $n$-gon circumscribed in the unit circle in $\C$ 
centered at the origin and so that one of its vertices lies at the point $i$. When $n$ is even, the regular $n$-gon surface is $R_1$ with opposite sides identified. To form the double $n$-gon surface when $n$ is odd we take a copy of $R_1$ rotated by $\frac{\pi}{n}$, which we call $R_2$, and identify parallel sides between $R_1$ and $R_2$. By triangulating these polygons and computing Euler characteristic, it is easy to see that the genus of the regular $n$-gon surface (resp. double $n$-gon surface) is $\lfloor \frac{n}{4}\rfloor$ (resp. $\frac{n-1}{2}$). 



Let $\Gamma_n$ denote the Veech group of the regular $n$-gon surface when $n$ is even and the double $n$-gon surface when $n$ is odd. Make the following definitions,
\begin{align*}
    r_n = \begin{pmatrix}
    \cos \frac{\pi}{n} & -\sin \frac{\pi}{n} \\
    \sin \frac{\pi}{n} & \cos \frac{\pi}{n}
    \end{pmatrix}, && s_n = \begin{pmatrix}
    1 & 2\cot \frac{\pi}{n} \\
    0 & 1
    \end{pmatrix}.
\end{align*}

\begin{thm}[\cite{veech_eisenstein} (Definition 5.7, Theorem 5.8); see also~\cite{mata} (Theorem 5.4)] \label{veechgroup}
When $n$ is even, $\Gamma_n$ is generated by $\{r_n^2, s_n, r_ns_nr_n^{-1} \}$ and is isomorphic to the $(n/2,\infty ,\infty)$ triangle group. In particular, $\mathbb{H}/\Gamma_n$ has two cusps. 

When $n$ is odd, $\Gamma_n$ is generated by $\{r_n, s_n\}$ and is isomorphic to the $(2,n ,\infty)$ triangle group. In particular, $\mathbb{H}/\Gamma_n$ has one cusp.

\end{thm}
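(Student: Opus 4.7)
The plan is to prove this in three stages: (a) exhibit each listed matrix as the derivative of an explicit affine diffeomorphism, (b) identify the subgroup $G \leq \mathrm{PSL}(2,\R)$ they generate with the claimed triangle group, and (c) upgrade the inclusion $G \subseteq \Gamma_n$ to an equality and read off the number of cusps.

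For stage (a), the rotational generators are essentially tautological from the construction. For even $n$, multiplication by $e^{2\pi i/n}$ is a Euclidean symmetry of the regular $n$-gon, so it is affine with derivative $r_n^2$; for odd $n$, multiplication by $e^{i\pi/n}$ swaps $R_1$ and $R_2$ consistently with the identifications of parallel sides, giving an affine map with derivative $r_n$. For $s_n$, I would verify that the horizontal direction is completely periodic and compute the horizontal cylinders explicitly, showing that they all have common modulus $\tfrac{1}{2}\tan(\pi/n)$; the simultaneous single Dehn twist in all horizontal cylinders then has derivative $s_n$. In the even case, since $r_n$ itself is not a symmetry of $R_1$, one must additionally verify that the analogous twist in the direction $r_n\cdot(0,1)$ is affine, yielding the conjugate $r_n s_n r_n^{-1}$; in the odd case this is automatic from $r_n \in \Gamma_n$.

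For stage (b), examine how the generators act on $\H^2$. The element $r_n^2$ (even) projects to an elliptic of order $n/2$ fixing $i$; the element $r_n$ (odd) projects to an elliptic of order $n$ fixing $i$; $s_n$ is parabolic fixing $\infty$; and $r_n s_n r_n^{-1}$ is parabolic fixing $r_n\cdot\infty$. Reading off the orders of these isometries and checking that a triangle with the appropriate vertices is a fundamental polygon identifies $G$ with the $(n/2,\infty,\infty)$ or $(2,n,\infty)$ triangle group respectively, and the count of cusps is the count of $G$-orbits of the parabolic vertices of this polygon.

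Stage (c) is the main obstacle: ruling out proper containment $G \subsetneq \Gamma_n$. By Gauss--Bonnet, $G$ is already a lattice in $\mathrm{PSL}(2,\R)$, so $\Gamma_n$ is automatically commensurable with $G$, but we must forbid a larger overgroup. One option is to invoke Takeuchi's list and verify that the two triangle groups in question are maximal Fuchsian subgroups for the relevant values of $n$. A more intrinsic route is via Veech's dichotomy: completely periodic directions on $(X,\omega)$ are in bijection with $\Gamma_n$-orbits of cusps, so it suffices to prove that every completely periodic direction is the $G$-image of the horizontal (odd case) or of the horizontal or its $r_n$-rotate (even case). Combining this cusp count with the elliptic structure forced by the rotational symmetries of the polygons then pins the signature of $\Gamma_n$ down to that of $G$, forcing $\Gamma_n = G$ by covolume. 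The delicate content is the cylinder-direction enumeration, and this is where I expect essentially all of the real work to live; it is also the step where matching the computations in \cite{veech_eisenstein} and \cite{mata} is cleanest.
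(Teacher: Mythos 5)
The paper does not actually prove Theorem~\ref{veechgroup}: it is imported verbatim from Veech \cite{veech_eisenstein}, and the remark immediately following it makes clear that the authors only ever rely on the easy containment of the group generated by the listed matrices inside the Veech group. So there is no in-paper argument to compare yours against; I can only evaluate the proposal on its own terms. Stages (a) and (b) are standard and essentially correct, modulo one factual slip: for even $n$ the horizontal cylinders of the regular $n$-gon surface do \emph{not} all have modulus $\tfrac{1}{2}\tan(\pi/n)$ --- the cylinder through the center has modulus $\tan(\pi/n)$ (cf.\ Remark~\ref{R:FixedPoint}), so $s_n$ is realized as a product of Dehn twists with different multiplicities (two in the central cylinder, one in the others), not as a simultaneous single twist. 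That is fixable.

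The genuine gap is stage (c) in the even case. The $(n/2,\infty,\infty)$ triangle group is \emph{not} a maximal Fuchsian group: Singerman's inclusion $(s,s,t)<(2,s,2t)$ with $s=\infty$ and $t=n/2$ places it with index two inside a $(2,n,\infty)$ triangle group, and by rigidity of triangle groups this inclusion is always realized. So the Takeuchi/maximality route cannot close the even case; you must separately rule out that $\Gamma_n$ is the index-two overgroup, for instance by showing that no affine automorphism has derivative $r_n$ (equivalently, that the surface and its rotation by $\pi/n$ are not translation equivalent). Your alternative route has the same hole: proving that every completely periodic direction is $G$-equivalent to the horizontal direction or to its $r_n$-rotate only bounds the number of cusps of $\Gamma_n$ above by two; if some affine map exchanged those two directions, $\Gamma_n$ would have a single cusp and could well be the $(2,n,\infty)$ overgroup. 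You need a translation invariant separating the two cylinder decompositions (the cylinder count $\lceil n/4\rceil$ versus $\lfloor n/4\rfloor$ works only when $n\equiv 2 \bmod 4$; otherwise one must compare moduli or the configuration of singularities on the boundaries). Finally, the phrase ``pins the signature of $\Gamma_n$ down \dots by covolume'' quietly requires knowing the genus and the complete elliptic data of $\mathbb{H}/\Gamma_n$, not just its cusps; as written this is an assertion, not an argument.
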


\begin{rmk}
In fact when $n$ is even, Veech considered a double cover of the regular $n$-gon surface; however, it is well-known that the Veech group of the two surfaces are identical.  Nevertheless, we will only ever use that, when $n$ is even, $\Gamma_n$ is contained in the Veech group of the regular $n$-gon surface, which is clear since each generator has that property. 
\end{rmk}

\begin{rmk}\label{R:CylinderCusp}
It is well known, see for instance Veech \cite[Section 3]{veech_eisenstein} or Hubert-Schmidt \cite[Lemma 4]{hubert_schmidt}, that for Veech surfaces, the maximal parabolic subgroups of the Veech group are in one-to-one correspondence with cylinder directions. The correspondence is given by associating to each cylinder direction, its stabilizer in the Veech group. Under this correspondence, the action of the Veech group on cylinder directions corresponds to its action by conjugation on maximal parabolic subgroups. Since conjugacy classes of maximal parabolic subgroups correspond to cusps of the quotient of the upper half plane by the Veech group, we see that each cusp corresponds to the orbit of a cylinder direction under the Veech group. Thus, every cylinder direction can be moved to one of these prescribed directions (described below) by an element of the Veech group.

In light of this observation, by Theorem \ref{veechgroup}, on the double $n$-gon surface  any cylinder direction may be sent to any other by an affine diffeomorphism. Similarly, there are two orbits of cylinders under the action of the affine diffeomorphism group on the regular $n$-gon surface. We will now describe these two cylinder directions. 


The first is the horizontal direction, which is covered by $\left\lceil \frac{n}{4} \right\rceil$ (resp. $ \frac{n-1}{2} $) 
cylinders when $n$ is even (resp. odd), as seen on the left in Figure \ref{decagon}. Let $g_n'$ denote the number of horizontal cylinders. Notice that $g_n'$ is greater than or equal to the genus of the surface. Since the vertices of $R_1$ lie at the points $\left\{ i \mathrm{exp}\left( \frac{2j\pi i}{n} \right) \right\}_{j=0}^{n-1}$, it is easy to see that the heights of the horizontal cylinders are 
\[ h_j := \mathrm{Im}\left( i \mathrm{exp}\left( \frac{2j\pi i}{n} \right) - i \mathrm{exp}\left( \frac{(2j+2)\pi i}{n} \right) \right) \]
for $j \in \{0, \hdots, g_n' -1 \}$. We can simplify the expression for the heights as follows,
\begin{equation}\label{E:height1}
h_j = \mathrm{Im}\left( i \mathrm{exp}\left( \frac{(2j+1)\pi i}{n} \right)\left( \mathrm{exp}\left( \frac{-\pi i}{n} \right) - \mathrm{exp}\left( \frac{\pi i}{n} \right) \right)  \right) = 2 \sin\left( \frac{\pi}{n} \right) \sin \left( \frac{(2j+1)\pi}{n} \right). 
\end{equation} 

When $n$ is odd, notice that since $\sin(x) = \sin(\pi - x)$ for any $x$, for $j > \left\lfloor \frac{n-3}{4} \right\rfloor$ we can write $\sin \left( \frac{(2j+1)\pi}{n} \right)= \sin \left( \frac{(n-2j-1)\pi}{n} \right)$, and so
\begin{equation}\label{E:height2} \{ h_j \}_{j=0}^{g_n'-1} = \left\{ 2 \sin\left( \frac{\pi}{n} \right) \sin \left( \frac{k\pi}{n} \right) \right\}_{k=1}^{g_n'}. 
\end{equation}

\begin{figure}
    \centering
    \includegraphics[width=80mm]{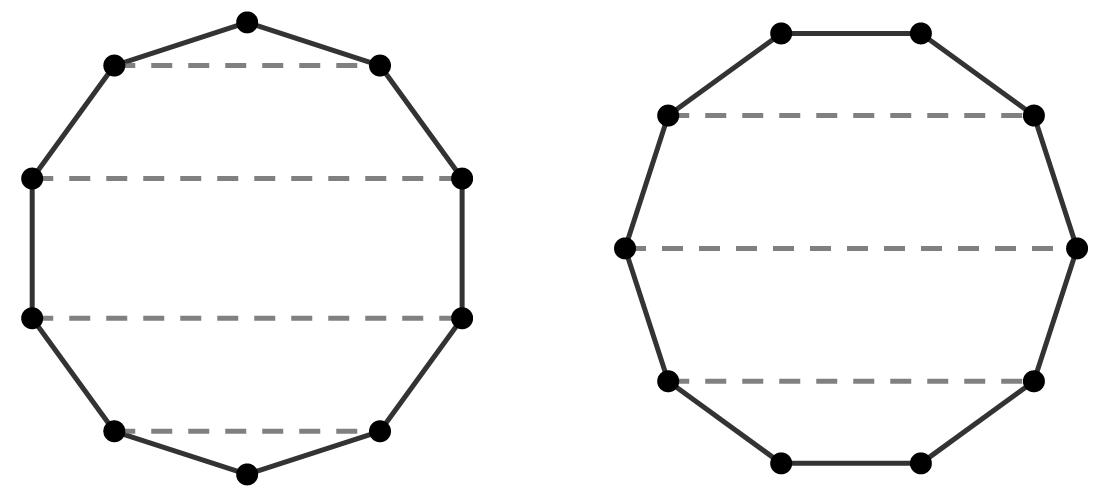}
    \caption{Two cylinder directions for the regular decagon.}
    \label{decagon}
\end{figure}

When $n$ is even, there are two orbits of cylinder directions under the action of the Veech group. Under the equivalences explained above, these two cylinder directions can be chosen to be stabilized by the maximal parabolic subgroup of $\Gamma_n$ generated by $s_n$ and $r_ns_nr_n^{-1}$. Using this observation (which is explained more fully in \cite[Section 5]{veech_eisenstein}), a  cylinder direction that cannot be sent to the horizontal one by the Veech group can be described as follows. Rotate the regular $n$-gon surface so that $R_1$ remains circumscribed in a unit circle but with one of its edges being horizontal. Let $R_1'$ denote this rotated copy of $R_1$. The horizontal direction is now covered by $g_n := \left\lfloor \frac{n}{4} \right\rfloor$ cylinders, as seen on the right in Figure~\ref{decagon}; the notation $g_n$ is chosen since it is equal to the genus of the surface. Since the vertices of $R_1'$ lie at the points $\left\{ i \mathrm{exp}\left( \frac{(2j+1)\pi i}{n} \right) \right\}_{j=0}^{n-1}$, it is easy to see that the heights of the horizontal cylinders are 
\[ h_j' := \mathrm{Im}\left( i \mathrm{exp}\left( \frac{(2j+1)\pi i}{n} \right) - i \mathrm{exp}\left( \frac{(2j+3)\pi i}{n} \right) \right) \]
for $j \in \{0, \hdots, g_n -1 \}$. We can simplify the expression for the heights as follows,
\begin{equation}\label{E:height3} h_j' = \mathrm{Im}\left( i \mathrm{exp}\left( \frac{(2j+2)\pi i}{n} \right)\left( \mathrm{exp}\left( \frac{-\pi i}{n} \right) - \mathrm{exp}\left( \frac{\pi i}{n} \right) \right)  \right) = 2 \sin\left( \frac{\pi}{n} \right) \sin \left( \frac{(2j+2)\pi}{n} \right).
\end{equation}
\end{rmk}




The following result was stated in McMullen \cite[page 7]{ratio_sines} where it is indicated that its proof follows from an application of the bounds in the proof of \cite[Theorem 2.1]{ratio_sines}. Since the deduction is not entirely trivial, we offer a proof in Section \ref{S:Ratio of Sines}. In our deduction, we will not use the full strength of \cite[Theorem 2.1]{ratio_sines}, which shows that the number of sine ratios of any fixed degree over $\Q$ is finite and which can be used to find all such ratios.



\begin{lemma}\label{L:RatioSines}
For rational numbers $0 < \alpha < \beta \leq \frac{1}{2}$, $\frac{\sin(\pi \alpha)}{\sin(\pi \beta)}$ is rational if and only if $\alpha = \frac{1}{6}$ and $\beta = \frac{1}{2}$.
\end{lemma}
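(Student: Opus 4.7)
The ``if'' direction is the immediate check $\sin(\pi/6)/\sin(\pi/2) = 1/2$. For the converse, my plan is to compute cyclotomic norms. Let $n$ be the least common denominator of $\alpha$ and $\beta$, write $\alpha = a/n$, $\beta = b/n$, and set $d = \gcd(a,n)$, $d' = \gcd(b,n)$. Since $|2\sin(\pi a/n)| = |1-\zeta_n^a|$ with $\zeta_n = e^{2\pi i/n}$, both $\sin(\pi\alpha)$ and $\sin(\pi\beta)$ lie in $\Q(\zeta_{2n})$; if $r := \sin(\pi\alpha)/\sin(\pi\beta) \in \Q$, one equates $|r|^{\phi(2n)}$ with the ratio of $|N_{\Q(\zeta_{2n})/\Q}|$ applied to the two sines. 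Invoking the classical evaluation $|N_{\Q(\zeta_n)/\Q}(1-\zeta_n^a)| = \Phi_{n/d}(1)^{\phi(n)/\phi(n/d)}$ and observing that the reduction $(\Z/2n\Z)^* \to (\Z/n\Z)^*$ is bijective for $n$ odd and $2$-to-$1$ for $n$ even, one arrives uniformly at
\[
|r| \;=\; \frac{\Phi_{n/d}(1)^{1/\phi(n/d)}}{\Phi_{n/d'}(1)^{1/\phi(n/d')}}.
\]

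The rest is a case analysis using the standard values: $\Phi_m(1) = p$ when $m = p^k$ with $k\ge 1$, $\Phi_m(1) = 1$ when $m$ has at least two distinct prime factors, and $\Phi_1(1) = 0$. Since $\alpha,\beta \notin \Z$, both $n/d, n/d' \ge 2$, so no vanishing occurs. Splitting on whether each of $n/d$, $n/d'$ is a prime power: if neither is, $|r| = 1$, contradicting $0 < r < 1$; if only $n/d$ is, $|r| > 1$, contradiction; and if both are prime powers $p^i, q^j$, unique factorization in $\Q$ forces either $p = q$ with $i = j$ (yielding $|r| = 1$) or $p \ne q$ with $\phi(p^i) = \phi(q^j) = 1$, forcing $p = q = 2$, again a contradiction. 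The only surviving configuration is $n/d'$ a prime power $q^j$ but $n/d$ not, giving $|r| = q^{-1/\phi(q^j)}$; rationality then forces $\phi(q^j) = 1$, i.e.\ $q^j = 2$, which forces $\beta = 1/2$ and $|r| = 1/2$. Since $r > 0$, we have $r = 1/2$, so $\sin(\pi\alpha) = 1/2$, which together with $0 < \alpha < 1/2$ yields $\alpha = 1/6$.

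The main obstacle in this plan is the ``both prime powers'' case, where one must rule out that two individually irrational factors of the form $p^{1/\phi(p^i)}$ and $q^{1/\phi(q^j)}$ combine into a rational number. This is handled by the uniqueness of prime factorization in $\Q$: when $p \ne q$ the two primes cannot interact, so both fractional exponents must simplify to integers, forcing $\phi(p^i) = \phi(q^j) = 1$ and hence $p = q = 2$; when $p = q$, the exponent of $p$ in the product is a rational number lying in $(-1,1)$, which is integral only when it is zero, forcing $i = j$. Aside from this, the argument is routine manipulation of Euler's totient and values of cyclotomic polynomials at $1$, and the only substantive analytic ingredient is the norm formula above, which is the fragment of the arithmetic behind \cite[Theorem 2.1]{ratio_sines} that we actually need.
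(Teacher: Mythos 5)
Your argument is correct, and it takes a genuinely different route from the one in the paper. The paper first uses Galois theory of maximal real subfields to force the two reduced denominators to be equal, then imports the quantitative bound from McMullen's proof of \cite[Theorem 2.1]{ratio_sines} to conclude $N < 45$, identifies the four-term polynomial $F$ with the $(2N)$th cyclotomic polynomial, and finishes with a finite verification. You instead extract an exact closed form for $|r|$ by pushing the identity $\zeta_{2n}^{a}-\zeta_{2n}^{-a}=2i\sin(\pi a/n)$ through the cyclotomic norm, reducing everything to the values $\Phi_m(1)$ and a prime-power case analysis. Your route is self-contained (no appeal to McMullen's estimate, no finite check), yields strictly more information (the exact value $|r|=\Phi_{n/d}(1)^{1/\phi(n/d)}\Phi_{n/d'}(1)^{-1/\phi(n/d')}$ whenever the ratio is rational), and subsumes the appeal to Niven's theorem that the paper makes in the $n_2=2$ case. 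The case analysis is exhaustive and the unique-factorization step that worries you does work exactly as you describe: writing a positive rational as $\prod_\ell \ell^{e_\ell}$ with integer exponents $e_\ell$ forces $1/\phi(p^i)$ and $1/\phi(q^j)$ to be integers when $p\ne q$, and forces the exponent $1/\phi(p^i)-1/\phi(p^j)\in(-1,1)$ to vanish when $p=q$.

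One small imprecision to repair: it is not true that $\sin(\pi\alpha)$ and $\sin(\pi\beta)$ lie in $\Q(\zeta_{2n})$ when $n$ is odd (e.g.\ $\sin(\pi/3)=\sqrt{3}/2\notin\Q(\zeta_6)=\Q(\sqrt{-3})$); by Lemma \ref{L:MaximalReal} they generate the maximal real subfield of $\Q(\zeta_{4n})$ in that case. The fix is cosmetic: apply the norm to $\zeta_{2n}^{a}-\zeta_{2n}^{-a}$ and $\zeta_{2n}^{b}-\zeta_{2n}^{-b}$, which do lie in $\Q(\zeta_{2n})$, and note that the factors of $2i$ cancel so that their quotient equals $r$ on the nose; the stated formula for $|r|$ then follows unchanged from $|N_{\Q(\zeta_{2n})/\Q}(1-\zeta_n^{a})|=\Phi_{n/d}(1)^{\phi(2n)/\phi(n/d)}$ and multiplicativity of the norm applied to the rational number $r$.
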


\begin{lemma}\label{heights}
On the regular $n$-gon and double $n$-gon surfaces, at least one cylinder direction has the property that the ratio of heights and circumferences of distinct cylinders in this direction have irrational ratio; every cylinder direction has this property whenever $n$ is not congruent to $0$ or $6$ mod $12$. Moreover, for any two parallel cylinders sharing a boundary saddle connection, their heights and circumferences have an irrational ratio. 
\end{lemma}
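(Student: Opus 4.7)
The plan is to reduce all three assertions to statements about the irrationality of ratios of heights, and then to apply Lemma \ref{L:RatioSines} to the explicit formulas of Remark \ref{R:CylinderCusp}. By that remark, every cylinder direction is in the Veech-orbit of one of the prescribed directions, so it will suffice to verify each claim for the horizontal direction (for all $n$) and, when $n$ is even, for the second prescribed direction.

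To handle circumferences, I would first observe that since $s_n$ lies in the Veech group and stabilizes the horizontal direction, it must act as a multi-Dehn twist on the horizontal cylinder decomposition. Comparing shears cylinder-by-cylinder forces $n_j c_j/h_j = 2\cot(\pi/n)$ for some positive integer $n_j$ attached to each cylinder, and hence $c_j/c_k = (n_k/n_j)(h_j/h_k)$ is rational if and only if $h_j/h_k$ is rational. The same argument with the conjugate parabolic $r_n s_n r_n^{-1}$ handles the second direction. All three assertions therefore reduce to the corresponding statements about heights.

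For the horizontal direction, the heights are $2\sin(\pi/n)\sin((2j+1)\pi/n)$. Because the numerators $2j+1$ are odd, Lemma \ref{L:RatioSines} combined with a short congruence argument will show that a rational ratio $h_j/h_k$ can exist only when $n \equiv 6 \pmod{12}$, and only for even $n$, since $(2j+1)/n$ cannot equal $1/2$ when $n$ is odd. The analogous analysis for the second direction, with heights $2\sin(\pi/n)\sin((2j+2)\pi/n)$, gives the disjoint obstruction $n \equiv 0 \pmod{12}$. Since $n \equiv 0$ and $n \equiv 6 \pmod{12}$ cannot both hold, at least one direction always has all ratios irrational, and both directions do except when $n \equiv 0, 6 \pmod{12}$, which yields the first two assertions.

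For the adjacent-cylinders assertion, I plan to argue from the stacking order of the polygonal strips that two cylinders share a boundary saddle connection exactly when their $j$-indices are consecutive. Specializing the analysis above to $k = j+1$ leaves the systems $(2j+1)/n = 1/6,\ (2j+3)/n = 1/2$ (horizontal) and its second-direction analogue $(2j+2)/n = 1/6,\ (2j+4)/n = 1/2$; the only integer solution in the first is $n = 6, j = 0$, which is excluded, and there is no integer solution in the second. For the odd-$n$ horizontal case, arguments $(2j+1)\pi/n$ may exceed $\pi/2$ and must be reduced via $\sin(x) = \sin(\pi - x)$, but the reduced value $(n - 2j - 1)/n$ cannot equal $1/2$ when $n$ is odd, so the irrationality still goes through. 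The main obstacle is really just this bookkeeping: tracking reductions of sine arguments into $(0, 1/2]$, carefully extracting the resulting modular conditions, and confirming that ``adjacent in the polygon'' coincides with ``consecutive $j$'' — the latter being immediate from enumerating the strips of $R_1$ (and $R_2$, when $n$ is odd) in order of height.
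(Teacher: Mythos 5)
Your proposal is correct and follows essentially the same route as the paper: reduce every cylinder direction to one of the two prescribed ones via Remark \ref{R:CylinderCusp}, apply Lemma \ref{L:RatioSines} to the explicit height formulas, and extract the congruence obstructions $n \equiv 6 \pmod{12}$ and $n \equiv 0 \pmod{12}$ together with the $k = j+1$ analysis for adjacent cylinders. The only cosmetic difference is that you derive the commensurability of circumferences from the parabolic $s_n$ acting as a multi-twist, whereas the paper simply cites the standard fact that parallel cylinders on a Veech surface have a rational ratio of moduli.
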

\begin{proof}
By Remark \ref{R:CylinderCusp}, any cylinder direction can be sent to one of the two directions specified in Remark \ref{R:CylinderCusp} by an element of the Veech group. In particular, the ratio of heights of two distinct parallel cylinders is either $\frac{h_j}{h_k}$ or $\frac{h_j'}{h_k'}$. By Equations (\ref{E:height1}), (\ref{E:height2}), and (\ref{E:height3}), up to inverting the ratio, these ratios always have the form $\frac{\sin(\pi \alpha)}{\sin(\pi \beta)}$ for rational numbers $0 < \alpha < \beta \leq \frac{1}{2}$ where $n\alpha$ and $n\beta$ are integers. By Lemma \ref{L:RatioSines} such a ratio is rational if and only if $\alpha = \frac{1}{6}$ and $\beta = \frac{1}{2}$. In particular by Equation (\ref{E:height2}) the ratio of heights of distinct parallel cylinders is irrational when $n$ is odd.

Suppose therefore that $n$ is even. By Equation (\ref{E:height1}), $\frac{h_j}{h_k} = \frac{\sin\left( \frac{(2j+1)\pi}{n} \right)}{\sin\left( \frac{(2k+1)\pi}{n} \right)}.$
If $j < k$ then this ratio is rational if and only if $\frac{2j+1}{n} = \frac{1}{6}$ and $\frac{2k+1}{n} = \frac{1}{2}$. This implies that $n = 12j + 6$ and $k = 3j+1$. Therefore, the ratio of heights of distinct cylinders in this cylinder direction is irrational if and only if $n$ is not congruent to $6$ mod $12$. Moreover, since $j$ and $k$ correspond to cylinders that share a boundary saddle connection if and only if $|j-k|=1$ we have that cylinders that share a boundary saddle connection have an irrational ratio of height as long as $j > 0$, which is the case since we have assumed that $n \ne 6$.

By Equation (\ref{E:height3}), $\frac{h_j'}{h_k'} = \frac{\sin\left( \frac{(2j+2)\pi}{n} \right)}{\sin\left( \frac{(2k+2)\pi}{n} \right)}.$
If $j < k$ then this ratio is rational if and only if $\frac{2j+2}{n} = \frac{1}{6}$ and $\frac{2k+2}{n} = \frac{1}{2}$. This implies that $n = 12j + 12$ and $k = 3j+2$. Therefore, the ratio of heights of distinct cylinders in this cylinder direction is irrational if and only if $n$ is not congruent to $0$ mod $12$. As before cylinders that share a boundary saddle connection have an irrational ratio of height.

By Remark \ref{R:CylinderCusp} any cylinder direction can be moved by an element of the Veech group to one of the two cylinder directions analyzed in the preceding paragraphs, so the result follows. The claims for circumferences hold since the ratio of moduli of parallel cylinders is rational for any Veech surface. 
\end{proof}



\begin{defn}
Given translation surfaces $(X, \omega)$ and $(X', \omega')$ a \emph{translation cover} $f: (X, \omega) \rightarrow (X', \omega')$ is a holomorphic map $f: X \rightarrow X'$ such that $f^* \omega' = \omega$. Similarly, if $(Y, q)$ is a quadratic differential that is not the square of a holomorphic one-form, then we say that $f: (X, \omega) \rightarrow (Y, q)$ is a \define{half-translation cover} if $f: X \rightarrow Y$ is holomorphic and $f^*q = \omega^2$.
\end{defn}

\begin{lemma}\label{min_cover}
Let $(X,\omega)$ be the regular $n$-gon or double $n$-gon surface. If $(X', \omega')$ is a translation surface so that $f: (X, \omega) \rightarrow (X', \omega')$ is a translation cover then $(X, \omega) = (X', \omega')$.
\end{lemma}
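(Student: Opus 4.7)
The plan is to argue by contradiction: suppose $f: (X, \omega) \to (X', \omega')$ is a translation cover of degree $d \geq 2$ and derive a contradiction.

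First, I would note that $f$ must be unramified away from the zeros of $\omega$. Indeed, $f^*\omega' = \omega$ forces $f$ to be a local isometry wherever $\omega$ is nonvanishing, so the branch locus of $f$ lies in the (one or two) singularities of $(X, \omega)$. Combined with the explicit singularity structure recorded above and Riemann--Hurwitz, this already constrains the stratum, genus, and ramification profile of $(X', \omega')$ sharply. In the odd case, where $(X, \omega)$ has a single zero of order $n-3$, one finds that $f$ must be totally ramified at this point with $e = d$, that $(X', \omega')$ has a single zero of order $(n-2)/d - 1$, and that $g(X') = \tfrac{1}{2}\bigl((n-2)/d + 1\bigr)$; the even case gives an analogous list.

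Next, I would invoke Lemma~\ref{heights} to pick a cylinder direction on $X$---say horizontal, after applying a suitable element of $\Gamma_n$---in which distinct parallel cylinders have pairwise irrational height ratios, and in particular pairwise distinct heights. Since $f$ is a local isometry, each horizontal cylinder $C \subset X$ maps onto a horizontal cylinder $f(C) \subset X'$ of the same height. As the heights on $X$ are all distinct, the induced map on horizontal cylinders is injective; surjectivity of $f$ then promotes it to a bijection, under which circumferences scale by $1/d$. The same argument applied in the Veech-dual direction (for even $n$) gives an analogous bijection there.

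The final step is to exploit this bijection to rule out $d \geq 2$. The cleanest route is via the trace field: because the Veech groups of $(X, \omega)$ and $(X', \omega')$ are commensurable, they share the trace field, which for the regular or double $n$-gon is $\mathbb{Q}(\cos(2\pi/n))$ of degree $\phi(n)/2$. Combining the McMullen bound $g(X') \geq \phi(n)/2$ with the Riemann--Hurwitz expression for $g(X')$ above converts the problem into a divisor inequality for $d \mid (n-2)$ (or the analogous quantity when $n$ is even), which a short case analysis forces to the trivial solution $d = 1$.

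The hardest part, I expect, is making the final step uniform in $n$: the trace-field/Riemann--Hurwitz inequality is tight precisely in the algebraically primitive case ($n$ odd prime, and a few even $n$), and becomes weaker when $\phi(n)$ is much smaller than $n$. For the residual composite $n$ one has to supplement the argument, most naturally by exploiting the second conclusion of Lemma~\ref{heights}: two cylinders on $X'$ sharing a boundary saddle connection still have irrational height ratio, but they lift to cylinders on $X$ whose shared-boundary structure, together with the scaling of circumferences by $1/d$, quickly becomes incompatible with the rigid saddle-connection combinatorics of the regular or double $n$-gon.
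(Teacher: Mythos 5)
Your first two steps track the paper's argument closely and are correct: the ramification analysis via $f^*\omega'=\omega$, and the use of Lemma~\ref{heights} to produce a direction in which distinct cylinders have pairwise irrational (hence distinct) heights and circumferences, so that the induced map on cylinders in that direction is injective. The gap is in the closing step, and you have in effect conceded it yourself. The missing idea is elementary: in any completely periodic direction on a translation surface of genus $g'$ whose one-form has $s'$ zeros, the number of cylinders is at most $g'+s'-1$. Since $s'\leq s\leq 2$ (preimages of zeros of $\omega'$ are zeros of $\omega$), the injection you constructed immediately gives $g\leq g'+s'-1\leq g'+1$, so if $g'<g$ then $g'=g-1$ and $s'=2$; Riemann--Hurwitz then forces $g\leq 3$, whittling the problem down to $n\in\{10,14\}$, which are killed by running the same cylinder count in a direction with $g+1$ cylinders (available by Lemma~\ref{heights} since $10,14\not\equiv 0,6\bmod{12}$). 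No trace fields, no uniformity-in-$n$ issues.

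Your proposed substitute does not close. The trace-field route requires importing two nontrivial external facts (commensurability of Veech groups under translation covers, and the bound $[\text{trace field}:\Q]\leq g'$), and even then the resulting inequality $d\leq (n-2)/(\phi(2n)-2)$ (or its even analogue) admits $d\geq 2$ for infinitely many composite $n$ where $\phi(n)$ is small relative to $n$; the divisibility constraint $d\mid n-2$ rescues some cases by accident but not systematically. Your fallback for the residual cases --- that the cylinder bijection ``quickly becomes incompatible with the rigid saddle-connection combinatorics'' --- is not an argument; it is precisely the place where a proof is needed, and the cylinder-count bound above is that proof. One smaller point: your assertion that circumferences scale by exactly $1/d$ under the bijection is true but requires a short area argument (heights are preserved, the cylinders fill the surface, and the total area scales by $d$); the paper only needs that each circumference is divided by \emph{some} positive integer, which is all the injectivity argument uses.
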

\begin{proof}
Suppose to a contradiction that there is a translation cover $f: (X, \omega) \rightarrow (X', \omega')$ where the genus $g'$ of $X'$ is less than the genus $g$ of $X$. For each cylinder $C$ of circumference $c$ on $(X, \omega)$, there is an integer $m$ so that $f(C)$ is a cylinder of circumference $c/m$. By Lemma \ref{heights}, there is a cylinder direction on $(X, \omega)$ in which all distinct pairs of cylinders have an irrational ratio of circumferences and hence map to distinct cylinders on $(X', \omega')$ under $f$. Since every cylinder direction on $(X, \omega)$ has at least $g$ cylinders (see the description of cylinder directions in Remark \ref{R:CylinderCusp}), it follows that there is a cylinder direction on $(X', \omega')$ with $g$ distinct cylinders. Since $\omega$ has at most two zeros, the number of zeros $s'$ of $\omega'$ is also at most two. Therefore, $g \leq g' + 1$, since the number of cylinders on $(X', \omega')$ is bounded above by $g' + s' - 1$. Since we have assumed that $g' < g$ we see that $g' = g-1$ and $s' = 2$. 

By the Riemann-Hurwitz formula, this implies that $g \leq 3$ and hence that $n \in \{5,7,8,10,12,14\}$. The condition that $\omega$ has two singularities reduces the possibilities to just $n \in \{10, 14\}$. Since $10$ and $14$ are not congruent to $0$ or $6$ mod $12$ it follows from Lemma \ref{heights} that these surfaces possess cylinder directions with $g+1$ cylinders so that the ratio of circumferences of distinct cylinders is irrational. As argued above, this implies that $(X', \omega')$ also has such a cylinder direction and hence that $g = g'$, which is a contradiction. 
\end{proof}




\begin{cor}\label{affine}
For the regular $n$-gon and double $n$-gon, the affine diffeomorphism group is isomorphic to the Veech group.
\end{cor}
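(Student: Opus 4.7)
The plan is to show that the derivative map $D: \Aff(X,\omega) \to \SL(X,\omega)$ is an isomorphism. Surjectivity is immediate from the definition of the Veech group, so the task reduces to showing that the kernel of $D$ is trivial. The kernel consists of precisely those affine diffeomorphisms whose derivative is the identity, i.e., the \emph{translation automorphisms} of $(X,\omega)$.

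First I would argue that any translation automorphism $\phi$ of $(X,\omega)$ has finite order. Since $n = 5$ or $n \geq 7$, the surface $(X,\omega)$ has genus at least $2$ (by the genus formulas $\lfloor n/4 \rfloor$ and $(n-1)/2$ recorded at the start of Section~\ref{S:Preliminaries}), so $\Aut(X)$ is finite and in particular $\phi$ generates a finite cyclic group $\langle \phi \rangle$.

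The crucial step is to realize a nontrivial $\phi$ as a proper translation cover so that Lemma~\ref{min_cover} can be applied. Suppose for contradiction that $\phi \neq \id$. Form the quotient $X' := X/\langle \phi \rangle$; since $\phi$ is holomorphic, $X'$ is a Riemann surface and the quotient map $f: X \to X'$ is a nonconstant holomorphic map of degree $|\langle \phi \rangle| > 1$. Because $\phi^*\omega = \omega$ (translations preserve the $1$-form exactly), $\omega$ descends to a holomorphic $1$-form $\omega'$ on $X'$ with $f^*\omega' = \omega$. Thus $f: (X,\omega) \to (X',\omega')$ is a translation cover to a translation surface of strictly smaller genus, contradicting Lemma~\ref{min_cover}.

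Hence $\ker D$ is trivial, and $D$ gives an isomorphism $\Aff(X,\omega) \cong \SL(X,\omega)$. The only subtlety to double-check is that the quotient one-form $\omega'$ is genuinely holomorphic and that $f$ is a translation cover in the sense of the paper's definition; both are immediate from $\phi^*\omega = \omega$ together with the holomorphicity of the quotient map, so no real obstacle arises. The main conceptual point is simply that translation automorphisms would produce the very covers ruled out by Lemma~\ref{min_cover}.
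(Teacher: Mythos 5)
Your proof is correct and follows essentially the same route as the paper: both reduce the statement to showing that the group of translation automorphisms (the kernel of the derivative map) is trivial, and both conclude by applying Lemma~\ref{min_cover} to the quotient translation cover. The only cosmetic difference is that you quotient by the cyclic group generated by a single nontrivial element rather than by the full automorphism group.
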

\begin{proof}
Let $(X,\omega)$ be the regular $n$-gon or double $n$-gon surface. Letting $\Aut(X,\omega)$ be the group of affine diffeomorphisms of derivative $\mathrm{Id}$, we have the following short exact sequence (see for instance \cite[Equation (2.6)]{veech_eisenstein}).
\[
\begin{tikzcd}
0 \arrow[r] & {\Aut(X, \omega)} \arrow[r] & {\Aff(X, \omega)} \arrow[r] & {\SL(X, \omega)} \arrow[r] & 0.
\end{tikzcd}
\]
It suffices to show that $\Aut(X,\omega)$ is trivial. This follows from Lemma \ref{min_cover} since the cover $(X,\omega)\to (X,\omega)/\Aut(X,\omega)$ must be the identity.
\end{proof}

\begin{rmk}
It is standard that the Weierstrass points that do not coincide with singularities of the flat metric are periodic points for any translation surface in a hyperelliptic locus of a stratum. However, in our case, this is particularly easy to see since the isomorphism between $\mathrm{Aff}(X, \omega)$ and $\mathrm{SL}(X, \omega)$ shows that the hyperelliptic involution is in the center of $\mathrm{Aff}(X, \omega)$--since it is sent to $-\mathrm{Id}$--and hence that $\mathrm{Aff}(X, \omega)$ permutes its fixed points. 
\end{rmk}

\begin{rmk}\label{R:FixedPoint}
Let $p$ provisionally denote the center of the regular $n$-gon $R_1$ whose opposite sides are identified to form the regular $n$-gon surface. We will show that $p$ is fixed by every element of the affine diffeomorphism group. It is obvious that it is fixed by the rotation $r_n$. The remaining generators of the affine diffeomorphism group are shears in the cylinder directions identified in Remark \ref{R:CylinderCusp}. In cylinder direction of slope $\frac{\pi}{n}$, $p$ lies on a boundary saddle connection of a cylinder and is trivially fixed. In the horizontal cylinder direction, $p$ lies on the core curve of a cylinder $C$ of modulus $\tan(\pi/n)$. The corresponding generator of the maximal parabolic subgroup is $s_n$, which performs two Dehn twists in $C$ and hence fixes $p$. Since $p$ is fixed by the generators of the affine diffeomorphism group, it is fixed by every element in it.
\end{rmk}

%
%

\section{Proof of Theorem \ref{maintheorem} and its corollaries}\label{S:MainTheorem}

We now begin our study of periodic points using the transfer principle. Our goal is to reduce the main theorem to identifying the periodic points on finitely many saddle connections. We start with the following definition.

\begin{defn}
When $n$ is even let $P_n$ denote the point in Remark \ref{R:FixedPoint}. When $n$ is odd, let $P_n$ denote the unique cone point of the flat metric of the double $n$-gon surface. Both points are fixed by every element of $\Gamma_n$.
\end{defn}


\begin{prop} \label{transfer}
The $\Gamma_n$ orbit of any periodic point on the regular $n$-gon or double $n$-gon surface contains a point lying on the leaf of the horizontal foliation passing through $P_n$ or, when $n$ is even, a point lying on the leaf of the foliation passing through $P_n$ that makes an angle of $\frac{\pi}{n}$ with the horizontal (see Figure \ref{lines}).


\end{prop}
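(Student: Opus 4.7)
The plan is to apply the transfer principle via the parabolic elements of $\Gamma_n$ described in Theorem~\ref{veechgroup}. Since $p$ is periodic, its $\Gamma_n$-orbit is finite, and in particular some positive power $s_n^N$ of the horizontal parabolic generator fixes $p$. A direct computation in cylinder coordinates shows that this forces $y_p/h_j$ to be rational, where $y_p$ is the absolute vertical coordinate of $p$ and $h_j$ is the height of the horizontal cylinder $C_j$ containing $p$. Indeed, $s_n^N$ displaces $p$ horizontally by $N \cdot 2\cot(\pi/n) \cdot y_p$, which must be a multiple of the circumference $c_j$ of $C_j$; combining with the cylinder relation $2\cot(\pi/n) = N_j c_j/h_j$ (where $N_j$ is the number of Dehn twists $s_n$ performs in $C_j$) yields the claimed rationality.

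Next I would apply the same reasoning to the conjugate parabolic $r_n s_n r_n^{-1}$, which stabilizes the direction of slope $\pi/n$; this yields an analogous rational constraint on the coordinate of $p$ orthogonal to $\pi/n$, now involving the cylinder heights $h'_k$ of Remark~\ref{R:CylinderCusp}. For $n$ odd, where $\Gamma_n$ has just one cusp, the two constraints together are slightly redundant but the same conclusion is reached by combining $s_n$ with, say, $r_n s_n^M r_n^{-1}$ for appropriate $M$.

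I would then combine the two rational constraints with Lemma~\ref{heights}, which asserts that the heights of distinct parallel cylinders are irrationally related (outside the congruence exceptions $n \equiv 0, 6 \pmod{12}$, which are easily handled separately). The irrationality of the ratios $h_j/h_k$ and $h'_j/h'_k$ should force the discrete set of positions allowed by the two rational constraints to collapse to a very restricted collection of points. A case analysis would then verify that each such position lies, after applying a suitable element of $\Gamma_n$, on either the horizontal leaf through $P_n$ or the $\pi/n$ leaf through $P_n$. The $\Gamma_n$-action is available for this reduction because $\Gamma_n$ fixes $P_n$ (Remark~\ref{R:FixedPoint}) and acts transitively on cylinder directions within each of its one or two orbits (Remark~\ref{R:CylinderCusp}), so any leaf through $P_n$ in a cylinder direction can be sent to one of the two distinguished ones.

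The main obstacle will be this last case analysis: explicitly checking which rational-height combinations occur and matching them to leaves through $P_n$. I expect the argument to split into cases based on the parity of $n$, since $P_n$ is the unique singularity when $n$ is odd (and its vertical coordinates can be read off directly from the polygon vertices) but an interior regular point at the center of $R_1$ when $n$ is even (where one must verify that the horizontal leaf through the center coincides with the core of the middle horizontal cylinder). Within each parity, a further subdivision by which cylinders $C_j$ and $C'_k$ contain $p$ should complete the proof, using the explicit formulas for $h_j$ and $h'_k$ from Remark~\ref{R:CylinderCusp}.
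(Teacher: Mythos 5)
Your proposal has a genuine gap, and it lies in the step where you claim the two rational-height constraints ``collapse to a very restricted collection of points.'' They do not. The constraint ``$p$ has rational height in its horizontal cylinder'' is satisfied on a countable \emph{dense} union of horizontal leaves (one for every rational multiple of the cylinder height), and likewise for the $\frac{\pi}{n}$-direction; the intersection of these two families is a countable dense subset of the surface. The irrationality of $h_j/h_k$ for \emph{distinct} cylinders $j \neq k$ (Lemma~\ref{heights}) gives no leverage here, because each constraint lives inside a single cylinder at a time and already permits a dense set of heights there. Consequently your final case analysis cannot succeed: there exist points with rational height in both distinguished cylinder directions that do not lie on any $\Gamma_n$-translate of a leaf through $P_n$, so no amount of checking will match the candidate set to the conclusion. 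What you have proved is essentially Lemma~\ref{rationalheightlemma} (the rational height lemma), which is a necessary condition for periodicity used in the \emph{second} step of the paper's argument --- after the point has already been localized to an explicit segment with endpoints of known rational height, where Lemma~\ref{cylinderlemma} can exploit the irrational height ratio of two adjacent parallel cylinders. You are attempting to derive the localization from the tool that only becomes effective after localization.

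The missing idea is the dichotomy for $\Gamma_n$-orbits of vectors in $\R^2 \setminus \{0\}$. The paper's proof takes the holonomy vector $v$ of a straight segment from $P_n$ to the periodic point $p$, observes that finiteness of the orbit of $p$ forces $\Gamma_n \cdot v$ to stay bounded away from $0$ and hence to be non-dense, and then invokes Dani's theorem (via the transfer principle applied to $U$-orbits on $\Gamma_n \backslash \SL(2,\R)$) to conclude that every non-dense $\Gamma_n$-orbit in $\R^2 \setminus \{0\}$ consists of vectors parallel to the $\Gamma_n$-orbit of a parabolic direction. That dichotomy --- a statement about unipotent dynamics, not about cylinder heights --- is what places $p$ on a leaf through $P_n$ in one of the at most two parabolic directions. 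Nothing in your argument supplies a substitute for it, and the fact that $\Gamma_n$ fixes $P_n$ and acts with one or two orbits on cylinder directions, while true and used in the paper, only becomes relevant once this dichotomy is in hand.
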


\begin{figure}[h]
    \centering
    \includegraphics[width=100mm]{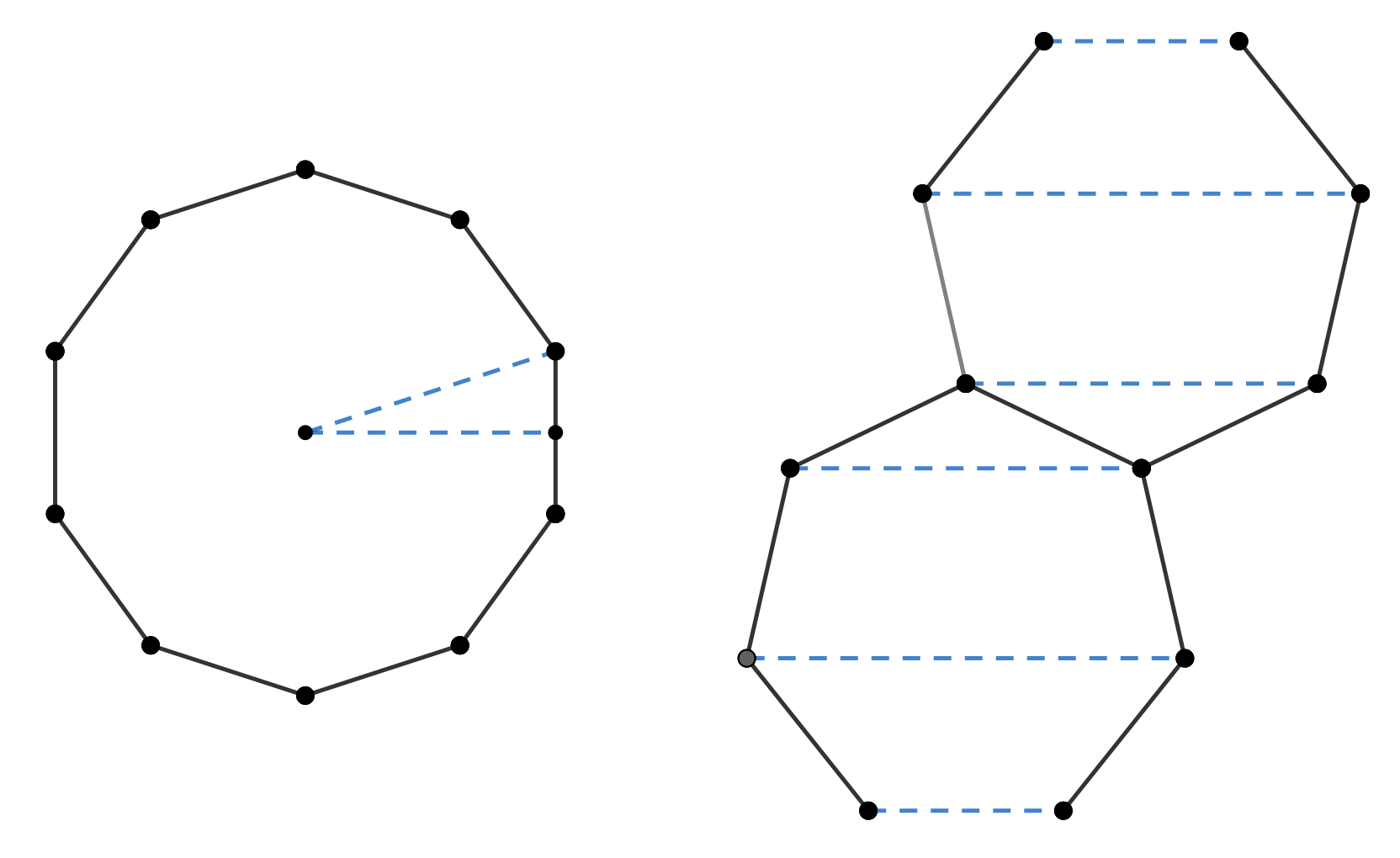}
    \caption{After applying Proposition \ref{transfer}, any periodic point can be assumed to lie on one of the dashed lines or its image under the hyperelliptic involution.}
    \label{lines}
\end{figure}

\begin{proof}
The transfer principle states that if $G$ and $H$ are topological groups acting continuously, from the left and right respectively, on a topological space $\cal X$ then the following are in bijective correspondence
\begin{enumerate}
    \item Closed (resp. dense) $G$ orbits on $\mathcal{X}/H$.
    \item Closed (resp. dense) $G \times H$ orbits on $\mathcal{X}$.
    \item Closed (resp. dense) $H$ orbits on $G \backslash \mathcal{X}$
\end{enumerate}
Under these correspondences, a $G\times H$ orbit of $x\in \cal X$ will be sent to the $G$ orbit of $xH$ or the $H$ orbit of $Gx$.
In our context, $G$ is the Veech group $\Gamma_n$, $\cal X$ is $\SL(2,\R)$, and $H$ is the unipotent subgroup $U:=\left\{\begin{pmatrix}
1 & s\\
0 & 1
\end{pmatrix}: s\in \R
\right\}$.
$\SL(2,\R)/U$ can be identified with $\R^2-\{0\}$ by sending $g\in \SL(2,\R)$ to $g\cdot
\begin{pmatrix}
1\\
0
\end{pmatrix}
$. Under this identification, the action of $\Gamma_n$ on $\SL(2,\R)/U$ is given by the standard linear action of $\SL(2,\R)$ on $\R^2-\{0\}$.
Define $a_t := \begin{pmatrix} e^t & 0 \\ 0 & e^{-t} \end{pmatrix}$ where $t$ is any real number.

It is a foundational result of Dani \cite[Theorem A]{Dani} that the only $U$ orbits of $\Gamma_n \backslash \SL(2,\R)$ are closed or dense, and the closed orbits are horocycles around the cusps. Recall from Theorem \ref{veechgroup} that cusps of $\Gamma_n \backslash \SL(2,\R)$  correspond to conjugacy classes of maximal parabolic subgroups, and these are generated by $s_n$ and, when $n$ is even, $r_n s_n r_n^{-1}$. In particular, the closed horocycles corresponding to the cusps of $\Gamma_n \backslash \mathrm{SL}(2, \R)$ are given by $\Gamma_n a_t U$ and also, when $n$ is even, $\Gamma_n r_n a_t U$ where $t$ is any real number. By the transfer principle, the only vectors in $\R^2 - \{0\}$ that do not have dense $\Gamma_n$ orbit are vectors parallel to a vector in $\Gamma_n \cdot \begin{pmatrix} 1 \\ 0 \end{pmatrix}$ or, when $n$ is even, parallel to a vector in $\Gamma_n \cdot \begin{pmatrix} \cos \frac{\pi}{n} \\ \sin \frac{\pi}{n} \end{pmatrix}$.

Now let $p$ be any periodic point that is distinct from $P_n$. By definition, the orbit of $p$ under $\Gamma_n$ is finite. In particular, $\Gamma_n \cdot p$ remains a bounded distance away from $P_n$. Since the regular $n$-gon surface is comprised of a convex polygon with opposite sides identified and since the double $n$-gon surface is comprised of two regular $n$-gons whose vertices correspond to $P_n$, it follows that there is straight line $\gamma$ from $P_n$ to $p$, the holonomy of which we will denote by $v$. Since $p$ remains a bounded distance away from $P_n$ we have that $\Gamma_n \cdot v$ is not dense in $\R^2$ (in particular, the orbit is not dense in a neighborhood of $0$) and hence there is a vector in the $\Gamma_n$ orbit of $v$ that is parallel to a vector in $\Gamma_n \cdot \begin{pmatrix} 1 \\ 0 \end{pmatrix}$ or, when $n$ is even, one in $\Gamma_n \cdot \begin{pmatrix} \cos \frac{\pi}{n} \\ \sin \frac{\pi}{n} \end{pmatrix}$.  This shows that $\Gamma_n \cdot p$ contains a point on either the horizontal leaf through $P_n$ or, in the case of $n$ even, the leaf that makes an angle of $\frac{\pi}{n}$ with the horizontal.

\end{proof}

\begin{defn}\label{D:CandidateLineSegment}
When $n$ is odd, call the saddle connections identified in Proposition \ref{transfer} \emph{candidate line segments}. When $n$ is even, notice that the hyperelliptic involution fixes both lines identified in Proposition \ref{transfer}. Each line can be partitioned into two subsegments, which are exchanged by the hyperelliptic involution and have one endpoint at $P_n$. For each line identified in Proposition \ref{transfer} we choose one of these subsegments and call them \emph{candidate line segments} (see Figure \ref{lines}). Since the hyperelliptic involution preserves the collection of periodic points, any periodic point can be moved by an element of the Veech group to a candidate line segment. 
\end{defn}

In the sequel, we will adopt the convention that all cylinders are assumed to be closed; that is, they contain their boundary.

\begin{defn}
A point contained in a cylinder $C$ is said to have \emph{rational height in $C$} if its distance from the boundary of $C$ is a rational multiple of the height of $C$. 
\end{defn}

A more general form of the following lemma, called the ``rational height lemma", appeared in Apisa \cite[Lemma 5.4]{earle_kra}.

\begin{lemma} \label{rationalheightlemma}
A periodic point on a Veech surface has rational height in any cylinder containing it.
\end{lemma}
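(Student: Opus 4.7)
The plan is to exploit the fact that on a Veech surface every cylinder direction is fixed by a parabolic element of the Veech group, which lifts to an affine diffeomorphism acting as a multi-Dehn twist in the cylinder decomposition, and then to use that a periodic point must be fixed by some power of this map.

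After rotating so that the direction of $C$ is horizontal, I would parametrize $C$ as $\R/c\Z \times [0,H]$ where $c$ and $H$ are its circumference and height, and suppose $p$ has coordinates $(x_0,y_0)$. If $y_0 \in \{0,H\}$, the conclusion is immediate, so assume $p$ lies in the interior of $C$. On a Veech surface, the horizontal direction is completely periodic and the moduli of the horizontal cylinders are pairwise rationally commensurable; consequently there exists $\phi \in \Aff(X,\omega)$ whose derivative is $\begin{pmatrix}1 & t\\ 0 & 1\end{pmatrix}$ for some $t>0$ and which restricts to $C$ as a positive integer power $M\ge 1$ of a Dehn twist. Concretely, $\phi$ acts on $C$ by $(x,y)\mapsto (x + Mcy/H,\,y)\bmod c$.

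Now I would use the hypothesis that $p$ is a periodic point: its $\Aff(X,\omega)$-orbit is finite, so in particular the $\phi$-orbit of $p$ is finite, whence $\phi^N(p)=p$ for some $N\ge 1$. Evaluating the formula above for $\phi^N$ at $p$ gives $MNc y_0/H \equiv 0 \pmod c$, equivalently $MN\cdot y_0/H \in \Z$, so $y_0/H \in \Q$. This is the definition of rational height in $C$.

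The only mildly subtle point, and what I would take as the main obstacle, is justifying the existence of the affine lift $\phi$ with the stated action on $C$; this rests on Veech's structure theorem that the direction of any cylinder on a Veech surface is a completely periodic direction with commensurable moduli and is preserved by a parabolic element of the Veech group, an observation we have already been using in the preceding sections (see Remark \ref{R:CylinderCusp}). Once $\phi$ is in hand, the rest of the argument is a one-line calculation in the cylinder's local coordinates.
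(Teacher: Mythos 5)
Your proposal is correct and follows essentially the same route as the paper: rotate so the cylinder is horizontal, invoke the parabolic element of the Veech group that acts on the cylinder as an integer power of a Dehn twist, and observe that finiteness of the orbit forces $y_0/H$ to be rational. The only cosmetic difference is that you separately invoke a power $\phi^N$ fixing $p$, whereas the paper directly notes that the twist orbit of $(x,y)$ is finite if and only if $y$ is a rational multiple of the height.
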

\begin{proof}
Let $p$ be the periodic point and suppose that it is contained in a cylinder $C$. After perhaps rotating the surface we may suppose without loss of generality that $C$ is horizontal. Denote its height and circumference by $h$ and $c$ respectively. Since the surface is Veech, the Veech group contains an element $g:=\begin{pmatrix} 1 & s \\ 0 & 1 \end{pmatrix}$ where $s = kc/h$ for some nonzero integer $k$. Choosing flat coordinates so that the bottom boundary of $C$ lies on the $x$-axis, we see that $g$ sends a point $(x, y) \in C$ to $(x+ykc/h,y)$ where the $x$ coordinate is taken modulo $c$. Thus, $(x,y)$ has finite orbit if and only if $y$ is a rational multiple of $h$.
\end{proof}

In the following lemma, it will be useful to use the notation $\overline{PQ}$ to refer to a straight line on a flat surface between points $P$ and $Q$. In general on a flat surface there are infinitely many straight lines between any two points, so we emphasize that this notation presupposes a choice of a line between $P$ and $Q$ and that the line is not uniquely determined by its endpoints. 

\begin{lemma} \label{cylinderlemma}
Let $C_2$ and $C_3$ be two parallel cylinders whose ratio of heights is irrational. Let $C_1$ be another cylinder. Suppose that $\overline{PQ}$ is a line segment satisfying the following:
\begin{enumerate}
    \item\label{I:oblique} $\overline{PQ}$ is neither parallel nor perpendicular to the core curves of $C_1$, $C_2$, and $C_3$.
    \item\label{I:InC1} $\overline{PQ}$ is contained in $C_1$ and its interior does not intersect the boundary of $C_1$. 
    \item\label{I:InC2} $\overline{PQ}$ is contained in $C_2 \cup C_3$ and its interior intersects the boundary of $C_2$ and $C_3$ in a unique point $R$. $\overline{PR}$ (resp. $\overline{RQ}$) is contained in $C_2$ (resp. $C_3$) and has nonzero length, see Figure \ref{cylinderlemmafig}.
    \item\label{I:NoWrapping} The orthogonal projection of $\overline{PQ}$ (resp. $\overline{PR}$, $\overline{RQ}$) to the core curve of $C_1$ (resp. $C_2$, $C_3$) is a proper subset of the core curve. 
    \item\label{I:RationalHeight} $P$ (resp. $Q$) has rational height in $C_1$ and $C_2$ (resp. $C_1$ and $C_3$).
\end{enumerate}
Then the only point on $\overline{PR}$ (resp. $\overline{RQ})$ that has rational height in both $C_1$ and $C_2$ (resp. $C_1$ and $C_3$) is $P$ (resp. $Q$). 
\end{lemma}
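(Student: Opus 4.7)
The plan is to parametrize the segment $\overline{PQ}$ by arc length $t \in [0,L]$, placing $P$ at $t=0$, $R$ at some $t_R \in (0,L)$, and $Q$ at $t=L$. Let $\theta_i$ denote the angle that the segment makes with the core curve of $C_i$; condition~(1) gives $\sin\theta_i \neq 0$, and since $C_2 \parallel C_3$ we have $\theta_2=\theta_3$. The first key observation is that the height of the moving point, measured in whichever cylinder contains it, is an \emph{affine} function of $t$ with slope $\pm\sin\theta_i$: condition~(2) gives this for $C_1$ on the full interval, condition~(3) gives it for $C_2$ on $[0,t_R]$ and $C_3$ on $[t_R,L]$, and condition~(4) rules out wrapping so that the height is single-valued.

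Applying condition~(5) together with the fact that $R$ lies on the boundary of $C_2$ and of $C_3$ (so its heights there are $0$ or the full cylinder height, hence trivially rational multiples), and taking differences of heights at the ``known'' points $P$, $R$, $Q$, I would next derive the three equations
\begin{align*}
L\,\lvert\sin\theta_1\rvert &= \lvert\beta_1\rvert\, h(C_1),\\
t_R\,\lvert\sin\theta_2\rvert &= \lvert\beta_2\rvert\, h(C_2),\\
(L-t_R)\,\lvert\sin\theta_2\rvert &= \lvert\gamma\rvert\, h(C_3),
\end{align*}
where $\beta_1,\beta_2,\gamma \in \Q$. Each is nonzero, because the corresponding $\sin\theta_i$ is nonzero by~(1) and the lengths $L$, $t_R$, $L-t_R$ are all strictly positive by the nonzero-length stipulations in~(3). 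Adding the last two yields $L\,\lvert\sin\theta_2\rvert = \lvert\beta_2\rvert\,h(C_2) + \lvert\gamma\rvert\,h(C_3)$.

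Now suppose for contradiction that there is $X \in \overline{PR}$ with $X \neq P$ having rational height in both $C_1$ and $C_2$, with parameter $t_X \in (0,t_R]$. The same reasoning gives $t_X\,\lvert\sin\theta_1\rvert = \lvert\beta_1^X\rvert\, h(C_1)$ and $t_X\,\lvert\sin\theta_2\rvert = \lvert\beta_2^X\rvert\, h(C_2)$, with $\beta_1^X,\beta_2^X$ nonzero rationals. Dividing by the analogous expressions for $L$ yields two formulas for $t_X/L$: the first, $\lvert\beta_1^X\rvert/\lvert\beta_1\rvert$, is rational, while the second is
\[
\frac{\lvert\beta_2^X\rvert\, h(C_2)}{\lvert\beta_2\rvert\,h(C_2)+\lvert\gamma\rvert\,h(C_3)} \;=\; \frac{\lvert\beta_2^X\rvert}{\lvert\beta_2\rvert+\lvert\gamma\rvert\cdot h(C_3)/h(C_2)}.
\]
Since $h(C_3)/h(C_2)$ is irrational and $\gamma \neq 0$, the denominator is irrational, so the whole expression is irrational (as $\beta_2^X \neq 0$), contradicting the first formula. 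The case of $\overline{RQ}$ is entirely symmetric, with $Q$ and $C_3$ playing the roles of $P$ and $C_2$.

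The main obstacle I anticipate is the bookkeeping needed to guarantee that each of $\beta_1$, $\beta_2$, $\gamma$, $\beta_1^X$, $\beta_2^X$ is nonzero, since without this the rational-versus-irrational dichotomy degenerates. All of these nonvanishings reduce to combining condition~(1) (nonzero sines) with the positive-length parts of condition~(3). Once they are in place, the contradiction is immediate from the elementary fact that a nonzero rational cannot equal a nonzero rational divided by an irrational number.
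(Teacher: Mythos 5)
Your proof is correct and follows essentially the same route as the paper's: both arguments exploit that the height of a point moving along $\overline{PQ}$ varies linearly in each cylinder, derive that the position of a second rational-height point along $\overline{PR}$ would be a rational fraction of the segment as measured in $C_1$ but an irrational fraction as measured via $C_2$ and $C_3$, and conclude by contradiction with the irrationality of $h(C_3)/h(C_2)$. The paper packages the linearity as similar-triangle comparisons while you write explicit affine functions of arc length, but this is only a difference of presentation, and your nonvanishing checks (positive lengths times nonzero sines) are exactly the points the hypotheses are designed to guarantee.
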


\begin{figure}[h]
    \centering
    \includegraphics[width=90mm]{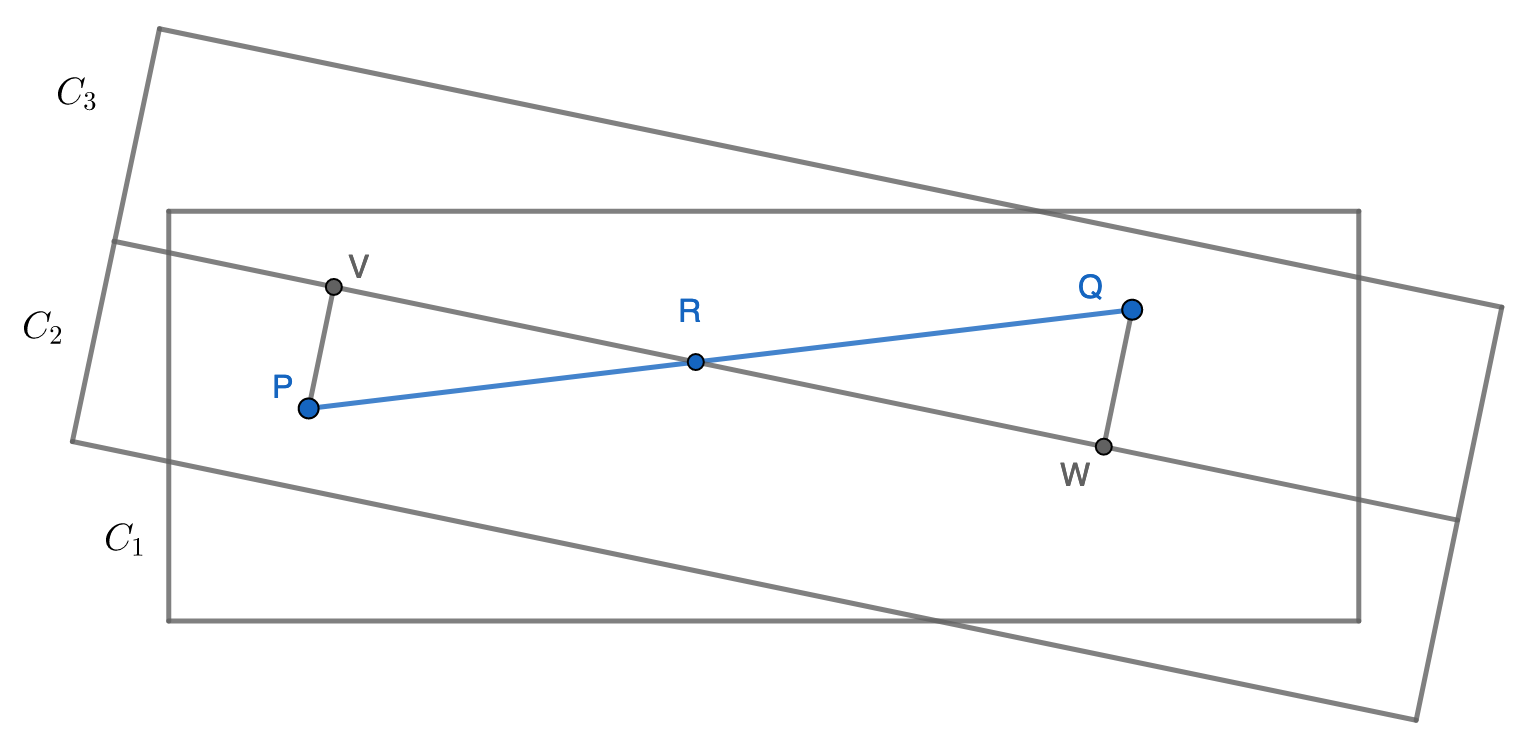}
    \caption{The three cylinders in Lemma~\ref{cylinderlemma}.}
    \label{cylinderlemmafig}
\end{figure}

\begin{proof}
Assume for the sake of contradiction that there is a point $S$ on $\overline{PR}$ other than $P$ with rational height inside both $C_1$ and $C_2$. Without loss of generality, perhaps after rotating the surface, we may suppose that $C_1$ is horizontal. 

When $P$ is contained in the interior of $C_1$, let $\ell$ denote the leaf of the horizontal foliation passing through $P$. When $P$ is contained in the boundary of $C_1$, let $\ell$ denote the boundary of $C_1$ containing $P$. Since the interior of $\overline{PQ}$ is contained in the interior of $C_1$ (by \eqref{I:InC1}), we may think of $C_1$ as a Euclidean cylinder and orthogonally project $\overline{PQ}$ onto $\ell$. By \eqref{I:oblique} and \eqref{I:NoWrapping}, this projection is a line $\overline{PU}$ where $U \ne P$. Let $\overline{QU}$ denote the vertical line contained in $C_1$ from $Q$ to $U$. The triangle, which we will denote $\Delta PQU$, formed by $\overline{PQ}$, $\overline{PU}$, and $\overline{QU}$ is, by \eqref{I:InC1}, a right triangle contained in $C_1$ as shown in Figure~\ref{trianglefig}. Let $T$ be the point on $\overline{QU}$ so that $\Delta SQT$ is similar to $\Delta PQU$ (see Figure \ref{trianglefig}). 


\begin{figure}[h]
    \centering
    \includegraphics[width=100mm]{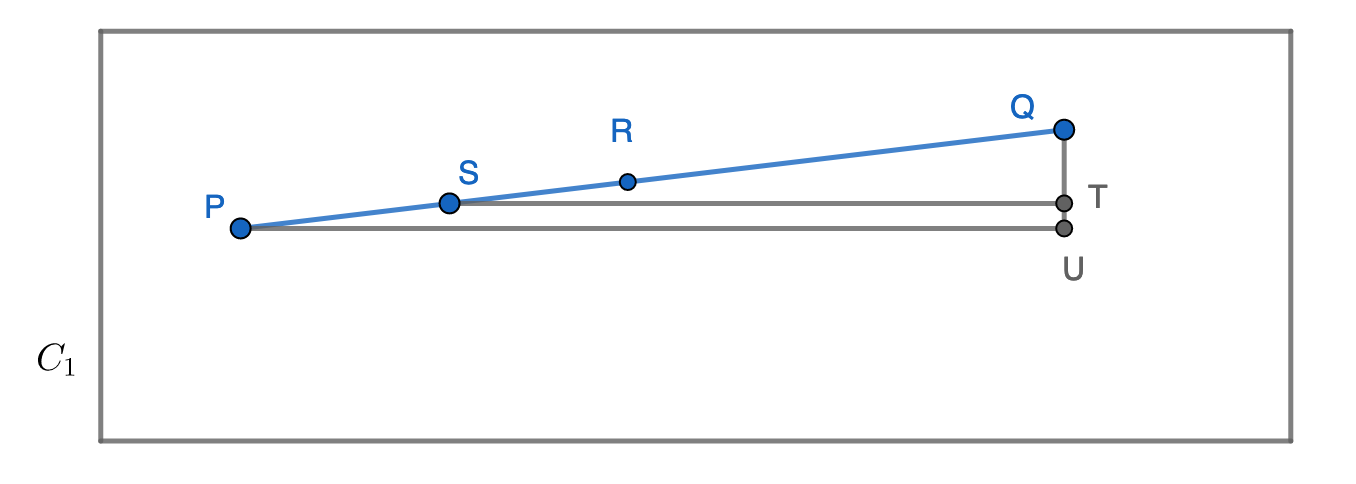}
    \caption{The triangles $SQT$ and $PQU$ are similar.}
    \label{trianglefig}
\end{figure}

Since, by \eqref{I:RationalHeight}, $P$, $Q$, and $S$ (and hence also $T$ and $U$) have rational height in $C_1$, it follows that the length of $\overline{UT}$ is a rational multiple of the length of $\overline{TQ}$. Since $\Delta SQT$ and $\Delta PQU$ are similar, it follows that the length of $\overline{PS}$ is a rational multiple of the length of $\overline{SQ}$.

The preceding argument (with the role of $\overline{PQ}$ being played by $\overline{PR}$ and that of $C_1$ by $C_2$) shows that the length of $\overline{PS}$ is a rational multiple of the length of $\overline{SR}$ (notice that \eqref{I:InC2} is the analogue of \eqref{I:InC1} here). It follow that the lengths of $\overline{PR}$ and $\overline{RQ}$ have a rational ratio. 

Finally, let $V$ (resp. $W$) denote the orthogonal projection of $P$ (resp. $Q$) to the component of the boundary of $C_2$ (resp. $C_3$) containing $R$ (see Figure~\ref{cylinderlemmafig}). We note that the triangles $\Delta PVR$ and $\Delta QWR$, which are formed in the same way we formed $\Delta PQU$, are similar. Since the lengths of $\overline{PR}$ and $\overline{RQ}$ have a rational ratio, so do the lengths of $\overline{PV}$ and $\overline{RW}$. However, by \eqref{I:RationalHeight}, the lengths of $\overline{PV}$ and $\overline{RW}$ are rational multiples of the height of $C_2$ and $C_3$ respectively. Therefore, we have shown that $C_2$ and $C_3$ have a rational ratio of heights, which is a contradiction.

\end{proof}


\begin{proof}[Proof of Theorem~\ref{maintheorem}]

Suppose first that $n$ is even. By Proposition \ref{transfer}, any periodic point can be moved by an element of the Veech group to one of the two candidate line segments (see Definition \ref{D:CandidateLineSegment} and Figure \ref{evencases}). The endpoints of the candidate line segments are singularities of the flat metric or Weierstrass points. It suffices to show that these endpoints are the only periodic points on a candidate line segment. 
Choosing a candidate line segment, let $P$ denote $P_n$, which is one endpoint, and let $Q$ denote the other endpoint. We will let $\overline{PQ}$ denote the candidate line segment. Notice that $\overline{PQ}$ is contained in a single cylinder $C_1$ that makes an angle of $-\frac{\pi}{n}$ with the horizontal and to which $\overline{PQ}$ is not parallel. This is the dotted cylinder in Figure \ref{evencases}. 

The line $\overline{PQ}$ is also contained in the union of two parallel cylinders $C_2$ and $C_3$ in the cylinder direction that makes an angle of $-\frac{2\pi}{n}$ with the horizontal. The cylinders $C_2$ and $C_3$ share a boundary saddle connection and so by Lemma \ref{heights} they have an irrational ratio of heights. These cylinders are the dashed cylinders in Figure \ref{evencases}.
 
 \begin{figure}[h]
    \centering
    \includegraphics[width=50mm]{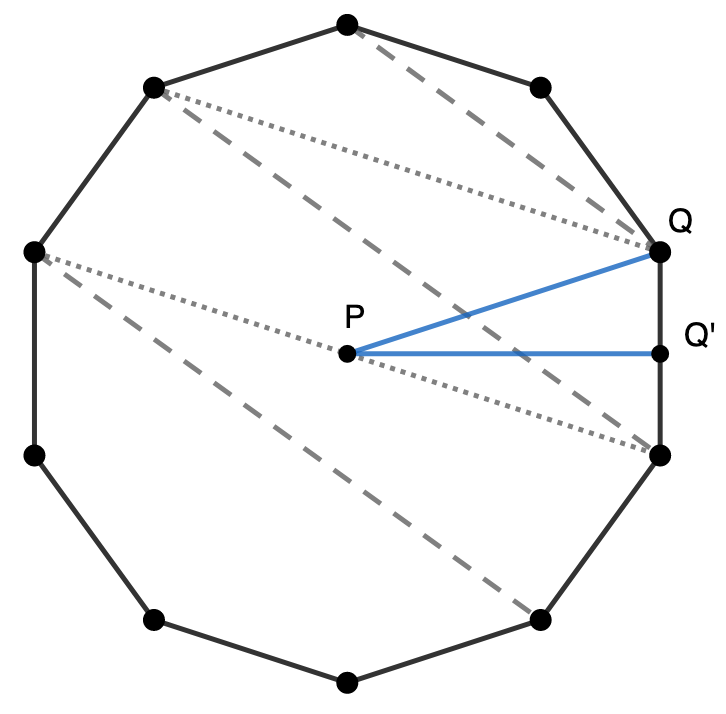}
    \caption{The cylinders dividing candidate lines segments in the even case. Both $Q$ and $Q'$ are endpoints of candidate line segments.}
    \label{evencases}
\end{figure}

Since the endpoints of the candidate line segments are either singularities of the flat metric or periodic points they have rational height in any cylinder containing them by Lemma \ref{rationalheightlemma}. By Lemma \ref{cylinderlemma} it follows that any point lying in the interior of $\overline{PQ}$ has irrational height in at least one of $C_1$, $C_2$, and $C_3$. Therefore, any point lying in the interior of a candidate line segment cannot be periodic by Lemma \ref{rationalheightlemma} as desired. 

Now suppose that $n$ is odd. Recall that the double $n$-gon surface is comprised of two regular $n$-gons, which we denoted by $R_1$ and $R_2$, that differ from each other by a rotation of $\pi/n$ and so that parallel sides are identified. Recall too that the hyperelliptic involution exchanges $R_1$ and $R_2$. Since every candidate line segment is contained in either $R_1$ or $R_2$ it suffices to classify the periodic points on the candidate lines in just $R_2$. Recall that we have supposed that $R_2$ is circumscribed in a circle centered at the origin in $\C$ and has a vertex lying at the point $-i$.

\begin{figure}[h]
    \centering
    \includegraphics[width=110mm]{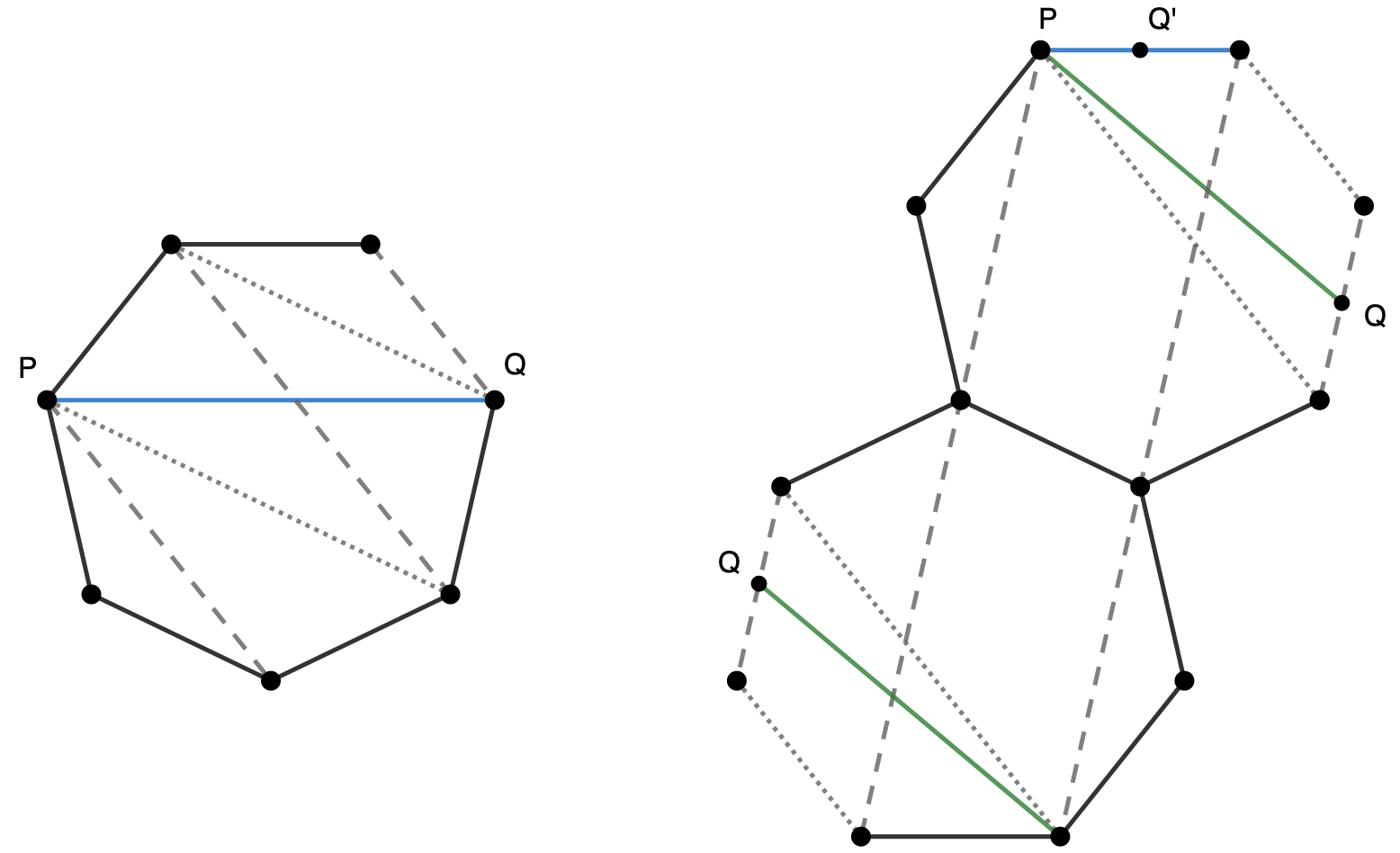}
    \caption{The cylinders dividing candidate lines in the odd case. The boundary of $C_1$ (resp. $C_2$ and $C_3$) are the dotted (resp. dashed) lines.
    }
    \label{oddcase}
\end{figure}

We will begin by showing that if a candidate line segment passes through the interior of $R_2$ then its interior contains no periodic points. Notice that such a candidate line segment is contained in the interior of a cylinder $C_1$ that makes an angle of $-\frac{\pi}{n}$ with the horizontal and in the union of two cylinders $C_2$ and $C_3$ that share a boundary saddle connection and make an angle of $-\frac{2\pi}{n}$ with the horizontal (see the left subfigure in Figure \ref{oddcase}). The claim that this candidate line segment contains no periodic points in its interior is now identical to the argument in the case of $n$ even.

It remains to consider the candidate line segment $\ell$ that is an edge of $R_2$ (see the right subfigure in Figure \ref{oddcase}). Let $Q'$ denote the midpoint of this candidate line segment, which is a Weierstrass point. 
We will show that $Q'$ is the only periodic point on the interior of $\ell$. As before, let $P$ denote $P_n$ and let $\overline{PQ'}$ denote the line contained in $\ell$ that begins at $P$, travels in the positive horizontal direction and ends at $Q'$. 

Notice that $\ell$ is entirely contained in a cylinder $C_1$ that makes an angle of $-\frac{\pi}{n}$ with the horizontal. Apply the element $r_n^{-1} s_n r_n$ of the Veech group, which shears the cylinders parallel to $C_1$. When $n$ is odd, any two parallel cylinders have equal moduli and so $r_n^{-1} s_n r_n$ acts on $C_1$ by performing a Dehn twist. In particular, the image $\overline{PQ'}$ under $r_n^{-1} s_n r_n$ remains in $C_1$ and becomes a line from $P$ to a new point $Q$ (see the right subfigure of Figure \ref{oddcase}). Let $\overline{PQ}$ denote this line.


It is easy to see that the line segment $\overline{P Q}$ is contained in two cylinders $C_2$ and $C_3$ that share a boundary saddle connection and make an angle of $-\frac{\pi}{n}$ with the vertical. Proceeding as in the case of $n$ even yields that $\overline{P Q}$ contains no periodic points in its interior and so the same must hold for $\overline{PQ'}$. Since $\ell$ is the union of $\overline{PQ'}$ and its image under the hyperelliptic involution, we have that $Q'$ is the only periodic point contained in the interior of $\ell$ as desired. \qedhere

\end{proof}

\begin{proof}[Proof of Corollary~\ref{main_corollary}]
By \cite[Theorem 2.6]{moeller_periodic_points} whenever a translation surface $(X, \omega)$ is not a translation cover of a torus there is a translation cover $\pi_{X_{min}}: (X, \omega) \rightarrow (X_{min}, \omega_{min})$ so that any translation cover with domain $(X, \omega)$ is a factor of $\pi_{X_{min}}$. Similarly, by \cite[Lemma 3.3]{marked_points}, there is a quadratic differential $(Q_{min}, q_{min})$ and a degree one or two (half)-translation cover $\pi: (X_{min}, \omega_{min}) \rightarrow (Q_{min}, q_{min})$ so that any half-translation cover is a factor of $\pi_{Q_{min}} := \pi \circ \pi_{X_{min}}$. By \cite[Theorem 3.6 and Lemma 3.8]{marked_points}, if $(p, q)$ are finitely blocked on $(X, \omega)$ then one of the following occurs:
\begin{enumerate}
    \item $p$ and $q$ are periodic points or zeros of $\omega$ and the blocking set may be taken to be the collection of all other distinct periodic points.
    \item Neither $p$ nor $q$ are periodic points or zeros of $\omega$, but $\pi_{Q_{min}}(p) = \pi_{Q_{min}}(q)$ and the blocking set may be taken to be the union of the periodic points with $\pi_{Q_{min}}^{-1}\left( \pi_{Q_{min}}(p) \right)$.
\end{enumerate}

Let $(X, \omega)$ now denote the regular $n$-gon or double $n$-gon surface. By Lemma \ref{min_cover}, $(X, \omega)$ is not a translation cover of a torus and $\pi_{X_{min}}$ is the identity. Since $(X, \omega) = (X_{min}, \omega_{min})$, the discussion above shows that $\pi_{Q_{min}}$ must be degree one or two. By uniqueness of $\pi_{Q_{min}}$, if $(X, \omega)$ admits any degree two map to a quadratic differential this map must be $\pi_{Q_{min}}$. Since $(X, \omega)$ is hyperelliptic, the quotient by the hyperelliptic involution is such a map and hence must be $\pi_{Q_{min}}$. 


By \cite[Lemma 3.1]{marked_points}, all pairs $(p,q)$ where $p$ is not a zero of $\omega$ and $q$ is its image under the hyperelliptic involution are finitely blocked. (Note that the statement of \cite[Lemma 3.1]{marked_points} does not include the case when $p=q$ is a Weierstrass point that is not a zero of $\omega$. However, the proof is identical.) 

For the regular $n$-gon surface, the Weierstrass points are the center, midpoints of the sides, and when $4 \mid n$, the vertices of the regular $n$-gon that comprises the surface. For the double $n$-gon surface, the Weierstrass points are the midpoints of the sides and vertices of the two regular $n$-gons that comprise the surface. It is now clear that a zero of $\omega$ is never blocked from itself by the collection of Weierstrass points and hence is never finitely blocked from itself. 

It remains to show that if $p$ and $q$ are distinct points that are Weierstrass points or zeros of $\omega$, then they are not finitely blocked from each other. Since the blocking set would have to consist of the other Weierstrass points, convexity of the $n$-gons comprising $(X, \omega)$ and the explicit description of the Weierstrass points in the preceding paragraph shows that this is never possible.
Thus, the only finitely blocked points are the ones listed in the statement of the corollary.  

\end{proof}

\begin{proof}[Proof of Corollary~\ref{application}]
Let $T$ be a billiard table that unfolds to a translation surface $(X,\omega)$. Two points $p$ and $q$ on $T$ are finitely blocked if and only if every preimage of $p$ is finitely blocked from every preimage of $q$ on $(X,\omega)$. When $(X, \omega)$ is the regular $n$-gon or double $n$-gon surface, each point is finitely blocked from at most one other by Corollary \ref{main_corollary}. When $T$ is the $\left( \frac{\pi}{2}, \frac{\pi}{n}, \frac{(n-2)\pi}{2n} \right)$ triangle the only points on $T$ that have two or fewer preimages on $(X, \omega)$ are the vertices of angle $\frac{(n-2)\pi}{2n}$ and $\frac{\pi}{n}$, which, in the first case, unfolds to the zeros of $\omega$ and, in the second, to either the Weierstrass point $P_n$ when $n$ is even or to two points exchanged by the hyperelliptic involution when $n$ is odd. Since the zeros of $\omega$ are not finitely blocked from any other point, we see that the only possible pair of finitely blocked points on $T$ is the vertex of angle $\frac{\pi}{n}$ from itself. When $n$ is even, the preimage of this vertex on $(X, \omega)$ is $P_n$, which is finitely blocked from itself. When $n$ is odd, the preimage of this vertex consists of two points swapped by the hyperelliptic involution, and each point is not finitely blocked from itself.




\end{proof}



\section{Proof of Lemma \ref{L:RatioSines}}\label{S:Ratio of Sines}


In this section we will show that for rational numbers $0 < \alpha < \beta \leq \frac{1}{2}$, $\frac{\sin(\pi \alpha)}{\sin(\pi \beta)}$ is rational if and only if $\alpha = \frac{1}{6}$ and $\beta = \frac{1}{2}$. McMullen stated this result in \cite[page 7]{ratio_sines} and indicated that its proof follows from an application of the bounds in the proof of \cite[Theorem 2.1]{ratio_sines}. Since we were unable to find an explicit proof in the literature, we provide it here. 


For any positive integer $m$, let $\zeta_m := \mathrm{exp}\left( \frac{2\pi i}{m} \right)$ and \[ g(m) := 
  \begin{cases}
    m & \text{ $m \equiv 2 \text{ mod } 4$ odd} \\
    2m & \text{ $4 \mid m$} \\
   4m  & \text{ $m$ odd.} 
  \end{cases} \]
We begin with the following simple lemma. Throughout this section, if $p$ and $q$ are positive integers we will let $(p, q)$ denote their greatest common divisor. 

\begin{lemma}\label{L:MaximalReal}
For positive integers $k$ and $n$ with $(k, n) = 1$, $\Q\left( \sin\left( \frac{\pi k}{n} \right) \right)$ is the maximal real subfield of $\Q(\zeta_{g(n)})$.
\end{lemma}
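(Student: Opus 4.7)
The plan is to express $2\sin(\pi k/n)$ as $\omega + \omega^{-1}$ for a primitive $N$-th root of unity $\omega$ with $\Q(\zeta_N) = \Q(\zeta_{g(n)})$, and then to invoke standard cyclotomic theory to identify $\Q(\omega + \omega^{-1})$ with $\Q(\zeta_N)^+$.

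Begin with the identity
\[ 2\sin(\pi k/n) = 2\cos\!\left(\frac{\pi(n-2k)}{2n}\right) = \zeta_{4n}^{n-2k} + \zeta_{4n}^{-(n-2k)}.\]
Set $\omega := \zeta_{4n}^{n-2k}$ and let $N$ denote its multiplicative order, so $N = 4n/\gcd(n-2k, 4n)$. The bulk of the work is a case analysis, carried out using only the hypothesis $(k,n) = 1$, to compute $N$ and verify that $\Q(\zeta_N) = \Q(\zeta_{g(n)})$. When $n$ is odd, $n-2k$ is odd and coprime to $n$, so $\gcd(n-2k, 4n) = 1$ and $N = 4n = g(n)$. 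When $4 \mid n$, the hypothesis forces $k$ odd, so $n-2k = 2(n/2 - k)$ with $n/2 - k$ odd and (by a quick check using $(k,n)=1$) coprime to $n$; hence $\gcd(n-2k, 4n) = 2$ and $N = 2n = g(n)$. When $n \equiv 2 \bmod 4$, write $n = 2m$ with $m$ odd; then $k$ is odd, $m - k$ is even, and $n-2k = 4 \cdot (m-k)/2$. A short computation shows that $N$ equals either $m$ or $2m$ depending on the parity of $(m-k)/2$, but in either subcase $\Q(\zeta_N) = \Q(\zeta_m) = \Q(\zeta_{2m}) = \Q(\zeta_{g(n)})$ since $m$ is odd.

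To finish, write $\omega = \zeta_N^a$ with $\gcd(a, N) = 1$. Under the Galois action of $(\Z/N\Z)^* = \mathrm{Gal}(\Q(\zeta_N)/\Q)$, the stabilizer of $\omega + \omega^{-1}$ is exactly $\{\pm 1\}$, so its orbit has size $\varphi(N)/2 = [\Q(\zeta_N)^+ : \Q]$. Since $\omega + \omega^{-1}$ lies in the real subfield $\Q(\zeta_N)^+$ and has that many $\Q$-conjugates, it generates $\Q(\zeta_N)^+$; hence $\Q(\sin(\pi k/n)) = \Q(\omega + \omega^{-1}) = \Q(\zeta_N)^+ = \Q(\zeta_{g(n)})^+$, as desired.

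The principal obstacle is the case analysis determining $N$: the piecewise definition of $g$ is engineered precisely so that $\Q(\zeta_{g(n)})$ captures $\Q(\zeta_N)$ uniformly, and the subtlety is the case $n \equiv 2 \bmod 4$, where the order $N$ itself may drop below $g(n)$ but the cyclotomic field $\Q(\zeta_N)$ does not, thanks to the identity $\Q(\zeta_{2m}) = \Q(\zeta_m)$ for odd $m$.
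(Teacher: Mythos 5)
Your proof is correct and follows essentially the same route as the paper: both rewrite $\sin(\pi k/n)$ as $\cos\bigl(2\pi(n-2k)/(4n)\bigr)$ (equivalently $\tfrac12(\omega+\omega^{-1})$ for $\omega=\zeta_{4n}^{\,n-2k}$), run the same three-way case analysis on $n \bmod 4$ using $(k,n)=1$ to identify the relevant cyclotomic field with $\Q(\zeta_{g(n)})$, and conclude via the fact that such an element generates the maximal real subfield. The only cosmetic difference is that you prove that last fact by the $\{\pm1\}$-stabilizer argument, whereas the paper cites it as well known.
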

\begin{proof}
It is well known that for positive integers $\ell$ and $m$ with $(\ell,m) = 1$, $\cos\left( \frac{2\pi \ell}{m} \right)$ generates the maximal real subfield of $\Q(\zeta_m)$. Notice that
\[ \sin\left( \frac{\pi k}{n} \right) = \cos\left( \frac{\pi}{2} - \frac{\pi k}{n} \right) = \cos \left( \frac{2\pi(n-2k)}{4n} \right). \]
Since $(k,n) = 1$, the only prime that divides both $n-2k$ and $4n$ is $2$. When $n$ is odd, we see that the numerator is odd, so $(n-2k, 4n) = 1$ and the claim holds. Similarly, when $4 \mid n$, $k$ is odd and so $\frac{n-2k}{4n} = \frac{\frac{n}{2} - k}{2n}$, where $\left(\frac{n}{2} - k, 2n\right) = 1$ since the numerator is odd. Finally, suppose that $n \equiv 2 \text{ mod } 4$, which in particular implies that $\Q(\zeta_n) = \Q(\zeta_{n/2})$. We see that $4 \mid n - 2k$ and that $8$ is the largest power of $2$ that divides $4n$. Therefore, when $\frac{n-2k}{4n}$ is put into lowest terms the denominator is either $n$ or $\frac{n}{2}$ and the result follows.
\end{proof}

Now let $\alpha = \frac{k_1}{n_1}$ and $\beta = \frac{k_2}{n_2}$ be rational numbers expressed in lowest terms where $0 < \alpha < \beta \leq \frac{1}{2}$  and so that $\frac{\sin(\pi \alpha)}{\sin(\pi \beta)}$ is rational. If $n_2 = 2$, then Niven's theorem implies that $\alpha = \frac{1}{6}$ and $\beta = \frac{1}{2}$. So suppose in order to deduce a contradiction that $n_2$ and hence also $n_1$ are greater than $2$. Let $N$ be the least common multiple of $n_1$ and $n_2$. 

\begin{lemma}\label{L:Equality}
$n_1 = n_2$
\end{lemma}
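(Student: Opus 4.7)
The plan is to use Lemma \ref{L:MaximalReal} to translate the hypothesis into an equality of maximal real subfields of two cyclotomic fields, and then to recover the value of $n_i$ from the common field via its conductor.

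Since $\sin(\pi\beta) \neq 0$ and $\sin(\pi\alpha)/\sin(\pi\beta)$ is a nonzero rational number, the fields $\Q(\sin(\pi\alpha))$ and $\Q(\sin(\pi\beta))$ coincide; call this common field $K$. By Lemma \ref{L:MaximalReal}, $K$ is simultaneously the maximal real subfield of $\Q(\zeta_{g(n_1)})$ and of $\Q(\zeta_{g(n_2)})$. I would first dispose of the degenerate case $K = \Q$: if $\sin(\pi\alpha)$ and $\sin(\pi\beta)$ are both rational, then Niven's theorem together with $0 < \alpha < \beta \leq \tfrac{1}{2}$ forces $\alpha = \tfrac{1}{6}$ and $\beta = \tfrac{1}{2}$, contradicting $n_2 > 2$. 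Hence $K \neq \Q$, so $g(n_i) \notin \{1,2,3,4,6\}$ for $i = 1,2$.

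Next I would invoke the conductor $f_K$ of $K$, i.e.\ the smallest positive integer $f$ with $K \subseteq \Q(\zeta_f)$. A standard fact about cyclotomic fields says that for $m \notin \{1,2,3,4,6\}$, the conductor of $\Q(\zeta_m)^+$ equals $m$ when $m \not\equiv 2 \pmod 4$ and equals $m/2$ when $m \equiv 2 \pmod 4$ (the latter using $\Q(\zeta_m) = \Q(\zeta_{m/2})$ for $m \equiv 2 \pmod 4$). Applying this to $m = g(n_i)$ and inspecting the three cases in the definition of $g$ yields $f_K = 4n_i$ when $n_i$ is odd, $f_K = 2n_i$ when $4 \mid n_i$, and $f_K = n_i/2$ when $n_i \equiv 2 \pmod 4$. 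These three expressions lie in pairwise disjoint residue classes modulo $8$ (respectively $\equiv 4$, $\equiv 0$, and odd), so $f_K$ determines both the residue class of $n_i$ modulo $4$ and, within that class, the value of $n_i$. Since $f_K$ depends only on $K$, we conclude $n_1 = n_2$.

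The main obstacle is the conductor formula for $\Q(\zeta_m)^+$; this is standard algebraic number theory, provable by comparing degrees and observing directly which cyclotomic fields contain a given $\Q(\zeta_m)^+$. Everything else is bookkeeping with the definition of $g$.
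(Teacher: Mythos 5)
Your argument is correct, but it takes a genuinely different route from the paper's. The paper argues by contradiction: assuming $n_1 \neq n_2$, it forms the compositum $\Q(\zeta_M)$ of $\Q(\zeta_{g(n_1)})$ and $\Q(\zeta_{g(n_2)})$, uses the coincidence of their maximal real subfields $K$ to get $[\Q(\zeta_M):K]=4$ and hence $\phi(M)=2\phi(g(n_1))=2\phi(g(n_2))$, pins down $M=12k$ with $g(n_1)=6k$, $g(n_2)=4k$, and then uses the Galois correspondence inside $\Q(\zeta_M)$ to force $k=1$ and a contradiction. You instead recover $n_i$ directly from the common field $K$ via its conductor, using the standard fact that $\Q(\zeta_m)^+$ has conductor $m$ for $m\not\equiv 2 \pmod 4$, $m\notin\{1,2,3,4\}$; the resulting case analysis on $g$ (conductor $4n_i$, $2n_i$, or $n_i/2$ according to $n_i$ odd, $4\mid n_i$, or $n_i\equiv 2\pmod 4$, lying in disjoint residue classes mod $8$) is clean and the injectivity is immediate. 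Your approach is more direct and avoids the compositum and the congruence computation entirely, at the cost of importing the conductor formula for the maximal real subfield, which is not proved in the paper but is easily established by the degree-and-totally-real comparison you indicate (a proper divisor $f$ of $m$ with $\Q(\zeta_m)^+\subseteq\Q(\zeta_f)$ would force $\Q(\zeta_f)$ to equal either $\Q(\zeta_m)^+$, impossible since $\Q(\zeta_f)$ is not totally real for $f>2$, or $\Q(\zeta_m)$, impossible for $m\not\equiv 2\pmod 4$). Your preliminary disposal of the case $K=\Q$ via Niven's theorem correctly mirrors the reduction the paper performs just before stating the lemma. Both proofs are valid; yours is arguably the more conceptual, the paper's the more self-contained.
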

\begin{proof}
Suppose in order to deduce a contradiction that $n_1 \ne n_2$. Since $g$ is an injection onto the even integers, $\Q(\zeta_{g(n_1)}) \ne \Q(\zeta_{g(n_2)})$. The compositum of these two fields is $\Q(\zeta_{M})$ where $M$ is the least common multiple of $g(n_1)$ and $g(n_2)$. Since, by Lemma \ref{L:MaximalReal}, $\Q\left( \sin\left( \frac{\pi k_i}{n_i} \right) \right)$ is the maximal real subfield of $\Q(\zeta_{g(n_i)})$ for $i \in \{1, 2\}$ and since $\frac{\sin(\pi \alpha)}{\sin(\pi \beta)}$ is rational, we see that the maximal real subfields of $\Q(\zeta_{g(n_1)})$ and $\Q(\zeta_{g(n_2)})$ coincide. In general, if $E$ and $F$ are subfields of $\C$ so that, letting $K := E \cap F$, we have that $E/K$ and $F/K$ are Galois, then the compositum $EF/K$ is Galois and $\mathrm{Gal}(EF/K) \cong \mathrm{Gal}(E/K) \times \mathrm{Gal}(F/K)$. In our case, since we assumed that $n_i > 2$ for $i \in \{1,2\}$, $\Q(\zeta_{g(n_i)})$ is a degree two extension of its maximal real subfield $K$. This shows that $\Q(\zeta_M)$ is a degree four extension of $K$ and so $\phi(M) = 2\phi\left( g(n_1) \right) = 2\phi\left( g(n_2) \right)$ where $\phi$ denotes the Euler phi function.  Using that $g$ is injective and that $n_1 \ne n_2$, this implies that $M = 12k$ where $3 \nmid k$ for a positive integer $k$ and where, up to exchanging $n_1$ with $n_2$, $g(n_1) = 6k$ and $g(n_2) = 4k$. Since $\mathrm{Gal}\left( \Q(\zeta_M) / \Q \right)$ is isomorphic to $\left( \Z/M\Z \right)^\times$, we see that under the Galois correspondence $\Q(\zeta_{g(n_1)}) = \Q(\zeta_M^2)$ corresponds to the subgroup generated by $6k+1$ and the maximal real subfield of $\Q(\zeta_M)$ corresponds to the subgroup generated by $-1$. Again by the Galois correspondence, $\Q(\zeta_{g(n_2)}) = \Q(\zeta_M^3)$ must correspond to the subgroup generated by $-(6k+1)$. This implies that $-3(6k+1) \equiv 3 \text{ mod } 12k$, equivalently $6k \equiv 6 \text{ mod } 12k$, which implies that $k=1$. However, in this case $g(n_2) = 4$ which implies that $n_2 = 1$, a contradiction to the assumption that $n_2 > 2$.
\end{proof}

Following McMullen \cite[Proof of Theorem 2.1]{ratio_sines}, there is a constant $C_1$ such that $\frac{N}{n_1} \leq C_1$. By Lemma \ref{L:Equality} we may set $C_1 = 1$. By \cite[Proof of Theorem 2.1]{ratio_sines}, $\frac{1}{2} \leq \frac{5 \log(2N)}{N}$, which implies that $N < 45$. By Lemma \ref{L:Equality} we have the following
\[ \frac{\sin(\pi \alpha)}{\sin(\pi \beta)} = \frac{\zeta_{2N}^{k_2} - \zeta_{2N}^{-k_2}}{\zeta_{2N}^{k_1} - \zeta_{2N}^{-k_1}} = q \]
for some positive rational number $q \in \Q$. Since $k_2 > k_1$, this implies that $\zeta_{2N}$ is a root of the polynomial
\[ F(x) := x^{2k_2} - q x^{k_1+k_2} + q x^{k_2 - k_1} - 1.\] 
The minimal polynomial for $\zeta_{2N}$, which is the $(2N)$th cyclotomic polynomial, has degree $\phi(2N)$ and divides $F$.

\begin{lemma}
$F$ is the $(2N)$th cyclotomic polynomial. 
\end{lemma}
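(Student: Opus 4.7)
The plan is to deduce $F = \Phi_{2N}$ from the divisibility $\Phi_{2N} \mid F$ (already observed) together with a degree comparison. Since $\Phi_{2N}$ is the minimal polynomial of $\zeta_{2N}$ over $\mathbb{Q}$ and both $F$ and $\Phi_{2N}$ are monic elements of $\mathbb{Q}[x]$, once the degree equality $\deg F = \phi(2N)$, i.e.\ $2k_2 = \phi(2N)$, is in hand, the equality $F = \Phi_{2N}$ follows automatically. The inequality $\phi(2N) \leq 2k_2$ is immediate from divisibility, so the whole task is the reverse inequality $2k_2 \leq \phi(2N)$.

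For this, I would combine the bound $N < 45$ established above (using Lemma~\ref{L:Equality} and the McMullen estimates) with the structural constraints $\gcd(k_i, N) = 1$ and $1 \leq k_1 < k_2 < N/2$. These conditions leave only finitely many triples $(N, k_1, k_2)$ to inspect, opening the door to a finite case analysis. For each such triple, I would check that either the rationality of $q = \sin(\pi k_2/N)/\sin(\pi k_1/N)$ is impossible---using, for instance, Lemma~\ref{L:MaximalReal} to identify the field $\mathbb{Q}(\sin(\pi k_i/N))$ as the maximal real subfield of $\mathbb{Q}(\zeta_{g(N)})$ and arguing that $\sin(\pi k_1/N)$ and $\sin(\pi k_2/N)$ are $\mathbb{Q}$-linearly independent there---or else the degree equality $2k_2 = \phi(2N)$ is forced directly.

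The hardest step will be this degree-matching argument, because a priori $F$ could admit additional irreducible factors in $\mathbb{Q}[x]$ beyond $\Phi_{2N}$, coming either from other cyclotomic polynomials or from roots outside the unit circle. A cleaner, case-free proof would ideally show that every root of $F$ is a primitive $(2N)$-th root of unity by exploiting the explicit anti-palindromic shape of $F$ and the Galois action of $(\mathbb{Z}/2N\mathbb{Z})^\times$ on its roots, but the finite range $N < 45$ makes a case check a dependable fallback. Once $\deg F = \phi(2N)$ has been verified, the monic quotient $F/\Phi_{2N}$ has degree zero, and the lemma follows.
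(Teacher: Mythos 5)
Your opening reduction is fine---since $\Phi_{2N}\mid F$ and both are monic in $\Q[x]$, the equality $F=\Phi_{2N}$ is equivalent to $2k_2=\phi(2N)$---but everything after that is a plan rather than a proof. The entire content of the lemma, namely the inequality $2k_2\le\phi(2N)$, is deferred to a finite enumeration of triples $(N,k_1,k_2)$ with $N<45$, $\gcd(k_i,N)=1$, $k_1<k_2\le N/2$, which you neither carry out nor equip with a mechanism for settling an individual case. Worse, one of the two closing moves you offer per case, ``the degree equality $2k_2=\phi(2N)$ is forced directly,'' is never available: $F(1)=1-q+q-1=0$ and $F(-1)=0$, so $(x^2-1)\,\Phi_{2N}$ divides $F$ and $2k_2\ge\phi(2N)+2$ in every case where $F$ is defined. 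The other closing move, verifying $q=\sin(\pi k_1/N)/\sin(\pi k_2/N)\notin\Q$ triple by triple, does not prove this lemma so much as bypass it by brute-forcing Lemma~\ref{L:RatioSines} over several hundred triples; and the tool you propose for it---$\Q$-linear independence of $\sin(\pi k_1/N)$ and $\sin(\pi k_2/N)$ inside the maximal real subfield of $\Q(\zeta_{g(N)})$---is exactly the statement being proved, so it cannot be invoked without a separate argument.

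The paper's proof is of a completely different and much shorter character, with no enumeration over $(k_1,k_2)$: assuming $F\ne\Phi_{2N}$, it deduces $2\phi(2N)\le 2k_2$, combines this with $k_2/N=\beta<\tfrac12$ to get $\phi(2N)/(2N)<\tfrac14$, and notes that since $\phi(2N)/(2N)=\prod_{p\mid 2N}(1-1/p)$ this forces $2N$ to have at least three distinct odd prime factors, hence $N\ge 105$, contradicting the bound $N<45$ already in hand. That said, your instinct that ``a priori $F$ could admit additional irreducible factors'' is well founded: the roots $\pm1$ exhibited above show that the implication $F\ne\Phi_{2N}\Rightarrow 2\phi(2N)\le 2k_2$, which the paper justifies only by the bare assertion $\phi(2N)\mid 2k_2$, itself requires more care than either your sketch or the paper supplies.
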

\begin{proof}
If not, then because $\phi(2N) \mid 2k_2$, it would follow that $2\phi(2N) \leq 2k_2$. Since $\frac{k_2}{N} = \beta < \frac{1}{2}$ we have, $\frac{\phi(2N)}{2N} < \frac{1}{4}$. Let $\Pi$ be the set of primes that divide $2N$, then we have
\[ \frac{\phi(2N)}{2N} = \prod_{p \in \Pi} \left( \frac{p-1}{p} \right) < \frac{1}{4} \]
For this inequality to hold, $N$ would need to have at least three prime factors aside from $2$. The smallest such number is $105$, which is greater than $45$.
\end{proof}

Since $k_2$ is coprime to $N$ and since $2k_2 = \phi(2N)$, we have that $(N, \phi(N)) = 1$. This implies that $N$ is squarefree and since $N > 2$, that $N$ is odd. Since $N  < 45$, $N$ is prime or $N \in \{15, 21, 33, 35, 39\}$. We can discard the cases of $N \in \{21, 39\}$ since $(N, \phi(N)) \ne 1$.  When $N$ is prime, the $(2N)$th cyclotomic polynomial is $\sum_{k=0}^{N-1} (-1)^{k} x^k$, which is never the same as $F$. The $(2N)$th cyclotomic polynomials for $N \in \{15, 33, 35\}$ all have more than four nonzero coefficients of monomial terms, so again they cannot be $F$ and we have a contradiction.

\bibliographystyle{halpha}
\bibliography{bibliography}

\end{document}